\newtheorem{thm}{Theorem}[section]
\newtheorem{lem}[thm]{Lemma}
\newtheorem{exa}[thm]{Example}
\newtheorem{rem}[thm]{Remark}
\theoremstyle{definition}
\newcommand{\scr}[1]{\mathscr #1}
\definecolor{wco}{rgb}{0.5,0.2,0.3}
\numberwithin{equation}{section} \theoremstyle{remark}
\newcommand{\ua}{\uparrow}
\title{{\bf 
 Exponential Ergodicity for McKean-Vlasov SDEs with Singular Interactions}\footnote{Supported in
 part by  National Key R\&D Program of China (2022YFA1006000), NNSFC (12271398, 12531007). } }
\author{
{\bf   Xing Huang, Feng-Yu Wang }\\
\footnotesize{Center for Applied Mathematics and KL-AAGDM, Tianjin
University, Tianjin 300072, China}\\
\footnotesize{  xinghuang@tju.edu.cn, 
wangfy@tju.edu.cn}\\
}
\begin{document}
\allowdisplaybreaks
\def\R{\mathbb R}  \def\ff{\frac} \def\ss{\sqrt} \def\B{\mathbf
B} \def\W{\mathbb W}
\def\N{\mathbb N} \def\kk{\kappa} \def\m{{\bf m}}
\def\ee{\varepsilon}\def\ddd{D^*}
\def\dd{\delta} \def\DD{\Delta} \def\vv{\varepsilon} \def\rr{\rho}
\def\<{\langle} \def\>{\rangle} \def\gg{\gamma}
  \def\nn{\nabla} \def\pp{\partial} \def\E{\mathbb E}
\def\d{\text{\rm{d}}} \def\bb{\beta} \def\aa{\alpha} \def\D{\scr D}
  \def\si{\sigma} \def\ess{\text{\rm{ess}}}
\def\beg{\begin} \def\beq{\begin{equation}}  \def\F{\scr F}
\def\Ric{\text{\rm{Ric}}} \def\Hess{\text{\rm{Hess}}}
\def\e{\text{\rm{e}}} \def\ua{\underline a} \def\OO{\Omega}  \def\oo{\omega}
 \def\tt{\tilde} \def\Ric{\text{\rm{Ric}}}
\def\cut{\text{\rm{cut}}} \def\P{\mathbb P} \def\ifn{I_n(f^{\bigotimes n})}
\def\C{\scr C}      \def\aaa{\mathbf{r}}     \def\r{r}
\def\gap{\text{\rm{gap}}} \def\prr{\pi_{{\bf m},\varrho}}  \def\r{\mathbf r}
\def\Z{\mathbb Z} \def\vrr{\varrho} \def\ll{\lambda}
\def\L{\scr L}\def\Tt{\tt} \def\TT{\tt}\def\II{\mathbb I}
\def\i{{\rm in}}\def\Sect{{\rm Sect}}  \def\H{\mathbb H}
\def\M{\scr M}\def\Q{\mathbb Q} \def\texto{\text{o}} \def\LL{\Lambda}
\def\Rank{{\rm Rank}} \def\B{\scr B} \def\i{{\rm i}} \def\HR{\hat{\R}^d}
\def\to{\rightarrow}\def\l{\ell}\def\iint{\int}
\def\EE{\scr E}\def\Cut{{\rm Cut}}
\def\A{\scr A} \def\Lip{{\rm Lip}}
\def\BB{\scr B}\def\Ent{{\rm Ent}}\def\L{\scr L}
\def\R{\mathbb R}  \def\ff{\frac} \def\ss{\sqrt} \def\B{\mathbf
B}
\def\N{\mathbb N} \def\kk{\kappa} \def\m{{\bf m}}
\def\dd{\delta} \def\DD{\Delta} \def\vv{\varepsilon} \def\rr{\rho}
\def\<{\langle} \def\>{\rangle}
  \def\nn{\nabla} \def\pp{\partial} \def\E{\mathbb E}
\def\d{\text{\rm{d}}} \def\bb{\beta} \def\aa{\alpha} \def\D{\scr D}
  \def\si{\sigma} \def\ess{\text{\rm{ess}}}
\def\beg{\begin} \def\beq{\begin{equation}}  \def\F{\scr F}
\def\Ric{\text{\rm{Ric}}} \def\Hess{\text{\rm{Hess}}}
\def\e{\text{\rm{e}}} \def\ua{\underline a} \def\OO{\Omega}  \def\oo{\omega}
 \def\tt{\tilde} \def\Ric{\text{\rm{Ric}}}
\def\cut{\text{\rm{cut}}} \def\P{\mathbb P} \def\ifn{I_n(f^{\bigotimes n})}
\def\C{\scr C}      \def\aaa{\mathbf{r}}     \def\r{r}
\def\gap{\text{\rm{gap}}} \def\prr{\pi_{{\bf m},\varrho}}  \def\r{\mathbf r}
\def\Z{\mathbb Z} \def\vrr{\varrho} \def\ll{\lambda}
\def\L{\scr L}\def\Tt{\tt} \def\TT{\tt}\def\II{\mathbb I}
\def\i{{\rm in}}\def\Sect{{\rm Sect}}  \def\H{\mathbb H}
\def\M{\scr M}\def\Q{\mathbb Q} \def\texto{\text{o}} \def\LL{\Lambda}
\def\Rank{{\rm Rank}} \def\B{\scr B} \def\i{{\rm i}} \def\HR{\hat{\R}^d}
\def\to{\rightarrow}\def\l{\ell}\def\BB{\mathbb B}
\def\8{\infty}\def\I{1}\def\U{\scr U} \def\n{{\mathbf n}}\def\v{V}
\maketitle

\begin{abstract} Let $k\in (d,\infty]$ and consider the $k*$-distance $$\|\mu-\nu\|_{k*}:= \sup\Big\{|\mu(f)-\nu(f)|:\ f\in\B_b(\R^d),\ \|f\|_{\tt L^k}:=\sup_{x\in \R^d}\|1_{B(x,1)}f\|_{L^k}\le 1\Big\}$$
between probability measures on $\R^d$.  The exponential ergodicity  in  $1$-Wasserstein and     $k*$-distances   is derived
for a class of McKean-Vlasov SDEs with small singular interactions measured by $\|\cdot\|_{k*}.$  Moreover, the exponential ergodicity in   $2$-Wasserstein distance and relative entropy is derived when the   interaction term is given by
 $$b^{(0)}(x,\mu) :=\int_{\R^d}h(x-y)\mu(\d y)$$ for some measurable function $h:\R^d\to\R^d$ with small  $\|h\|_{\tt L^k}$.
 \end{abstract} \noindent
 AMS subject Classification:\  60J60, 60H10, 60B10.   \\
\noindent
 Keywords: McKean-Vlasov SDEs, singular interaction, $k*$-distance, Wasserstein distance, exponential ergodicity, relative entropy.
 \vskip 2cm

\section{Introduction}

Let $\scr P$ be the set of all probability measures on $\R^d$ equipped with the weak topology. According to \cite{HRW25}, for a constant $k\in [1,\infty]$, let
$$\|\mu\|_{k*}:= \sup\big\{\mu(|f|):\ f\in \B(\R^d),\ \|f\|_{\tt L^k}\le 1\big\},\ \  \mu\in \scr P,$$
where  $\B(\R^d)$ is the set of all measurable functions on $\R^d$, and due to \cite{XXZZ}
$$\|f\|_{\tt L^k}:= \sup_{z\in\R^d} \|1_{B(z,1)}f\|_{L^k}$$
for $B(z,1):=\{x\in \R^d:\ |x-z|\le 1\}$ and $\|\cdot\|_{L^k}$ being the $L^k$-norm with respect to the Lebesgue measure on $\R^d$, where we set   $\|f\|_{\tt L^\infty}=\|f\|_\infty:=\sup_{\R^d}|f|$ when $k=\infty$.  Then
$$\scr P_{k*}:=\big\{\mu\in \scr P:\ \|\mu\|_{k*}<\infty\big\}$$
is complete under the $k*$-distance
$$\|\mu-\nu\|_{k*}:= \sup_{\|f\|_{\tt L^k}\le 1}|\mu(f)-\nu(f)|,\ \ \ \mu,\nu\in \scr P_{k*}.$$

Consider
the following time-homogeneous McKean-Vlasov SDEs on $\R^d$:
\beq\label{E0} \d X_t= b (X_t, \L_{X_t})\d t+  \si(X_t)\d W_t,\ \ t\geq 0,\end{equation}
where  $W_t$ is an $d$-dimensional Brownian motion on a  probability base (i.e. complete filtered  probability space) $(\OO,\{\F_t\}_{t\ge 0},\F,\P)$, $\L_{X_t}$ is the distribution of $X_t$, and
$$ \si: \R^d\to \R^d\otimes\R^d,\ \ \ b:  \R^d\times \scr P_{k*} \to \R^d $$
are measurable.  As explained in \cite{HRW25} that typical examples of $b $ are of  type $b(x,\mu)= b^{(1)}(x)+b^{(0)}(x,\mu)$, where $b^{(1)}$ is a Lipschitz vector field and
\beq\label{0}b^{(0)}(x,\mu):= \int_{\R^d} W(x,y)\mu(\d y),\ \ \ x\in\R^d,\ \mu\in\scr P_{k*}\end{equation}
for a measurable map $W:\R^d\times\R^d\to \R^d$ satisfying
\beq\label{00} |W(x,y)|\le \sum_{i=1}^l \ff {c}{1\land |x-y-z_i|^\bb}\end{equation}
for some constants $l\geq 1,c>0,$ $\bb\in (0,\ff dk)$ and finite many points $\{z_i\}_{1\le i\le l}\subset \R^d.$

We call $X_t$ a solution of \eqref{E0},   if it is an adapted continuous process $X_t$ on $\R^d$   such that
$\L_{X_t}\in \scr P_{k*}$  for all $t> 0$, and $\P$-a.s.
$$X_t=X_0+\int_0^t b(X_s,\L_{X_s})\d s+\int_0^t \si(X_s)\d W_s,\ \ t\ge 0.$$
A couple $(X_t,W_t)$ is called a weak solution of \eqref{E0},   if there is a probability base such that $W_t$ is a $d$-dimensional Brownian motion  and $X_t$ is a strong solution of \eqref{E0}.
If for any $\F_0$-measurable initial value $X_0$ (respectively, any initial distribution $\mu$), \eqref{E0} has a unique strong (respectively, weak) solution, we call the SDE strongly (respectively, weakly) well-posed.
 We call  \eqref{E0} well-posed if it is both strongly and weakly well-posed.
 In this case, denote
$$P_t^*\mu=\L_{X_t}\ \text{for}\ t\ge 0 \  \text{and}\  \L_{X_0}=\mu.$$
A measure $\bar\mu\in   \scr P $ is called an invariant probability measure of $P_t^*$ for \eqref{E0}, if $P_t^*\bar\mu=\bar\mu$ holds for all $t\ge 0$.

Let $\tt{\scr P}$ be a dense subspace of $\scr P$ equipped with a complete metric $\W$.    We call \eqref{E0} (or $P_t^*$) exponentially ergodic in $\W$ or  $\W$-exponentially ergodic, if it has   a unique invariant probability measure $\bar\mu\in \tt{\scr P}$   such that
$$\W(P_t^*\mu,\bar\mu)\le c\e^{-\ll t}\, \W(\mu,\bar\mu),\ \ \ t\ge 0,\ \mu\in  \tt{\scr P}$$
holds for some constants $c,\ll>0$.
A typical choice of  $(\tt{\scr P},\W)$ is  the $q$-Wasserstein space $(\scr P_q,\W_q)$ for $q\in [1,\infty)$, where
\beg{align*} & \scr P_q:=\{\mu\in\scr P,\mu(|\cdot|^q)<\infty\},\\
& \W_q(\gg,\mu)= \inf_{\pi\in \C(\gg,\mu)}\bigg(\int_{\R^d\times\R^d}|x-y|^q\pi(\d x,\d y)\bigg)^{\ff 1 q},\ \ \gg,\mu\in \scr P_q\end{align*}
for   $\C(\gg,\mu)$ being  the set  of all couplings of $\gamma$ and $\mu$.
We also study  the exponential ergodicity in the relative entropy
 $$\Ent(\mu|\gg):= \beg{cases} \mu(\log \ff{\d\mu}{\d\gg}),\ &\text{if}\ \ff{\d\mu}{\d\gg}\ \text{exists},\\
 \infty, \ &\text{otherwise.}\end{cases}$$

When  $b^{(0)}(x,\mu)$ is Lipschitz continuous in $\mu\in\tt{\scr P}$ with respect a nice probability distance $\W$, the exponential  ergodicity of \eqref{E0} have been intensively investigated,
see   \cite{EGZ,LWZ, GLWZ, WFY 2023, Schuh 2024, RW24} for $\W=\W_1$ as well as  \cite{LMJ 2021} with the L\'{e}vy noise, where  \cite{GLWZ} derived
the exponential ergodicity in the mean field entropy by using the uniform log-Sobolev inequality for the associated mean field interacting particle systems;
  see  \cite{RW21} for $\W=\W_2$   where  the exponential ergodicity in $\W_2$ and relative entropy by establishing the  log-Sobolev and log-Harnack inequalities; and see
      \cite{WSB} for $\W$ being the total variation distance.

 However,  the study of  ergodicity is still open when $b(x,\mu)$ contains a singular  term   like in \eqref{0} and \eqref{00}.
In this paper, we prove the exponential ergodicity of \eqref{E0}  with such type singular interactions for $\bb\in (0,1)$,  which include  the  interactions between cytonemes in cellular
communication  \cite{ACPSG}. In this case,   the well-posedness and propagation of chaos have been derived in \cite{HRZ, 21RZ, Zhao}. Recently, when $\bb\in (0,d)$,
 the well-posedness and entropy-cost inequality have been presented in the recent work \cite{HRW25}.

 In Section 2, we prove the exponential ergodicity  in $\|\cdot\|_{k*}$ and $\W_1$ for $b(x,\mu)=b^{(0)}(x,\mu)+b^{(1)}(x),$
 where $b^{(0)}$ is singular in $\mu$ and   $b^{(1)}$ is Lipschitz continuous and dissipative in long distance. When $b^{(0)}$ is given by \eqref{0} for a singular kernel of type $W(x,y)= h(x-y)$, the exponential ergodicity in
 $\W_2$ and $\Ent$ is addressed in  Section 3.

\section{ Exponential Ergodicity in $\W_1$ and $\|\cdot\|_{k*}$}

When $b(x,\mu)=b^{(1)}(x)$ does not depend on $\mu$, the distribution dependent SDE \eqref{E0} reduces to  the classical SDE
\beq\label{E} \d X_t= b^{(1)}(X_t)\d t+\si(X_t)\d W_t,\ \ t\ge 0.\end{equation}
In general, \eqref{E0} can be  regarded as a distribution dependent perturbation of \eqref{E}.
Intuitively, if \eqref{E} is exponential ergodic,   so is  \eqref{E0}  provided the interactions included in
$$b^{(0)}(x,\mu):=b(x,\mu)-b^{(1)}(x)$$
is weak enough in a suitable sense.
In this spirit, we make the following assumptions on
   $b^{(1)}, b^{(0)}$ and $\si.$

\beg{enumerate} \item[{\bf (A)}]   \emph{
  $\si$ is weakly differentiable such that for some constant  $p\in (d,\infty]$
  $$ \|\si\|_\infty+\|\si^{-1}\|_\infty+  \|\nn \si\|_{\tt L^p}<\infty.$$
 Moreover, $b^{(1)}$ is Lipschitz continuous and there exist constants  $K,R\in (0,\infty)$ such that
      $$\<b^{(1)}(x)-b^{(1)}(y),x-y\>\leq -K|x-y|^2,\   \  \text{if }\ |x-y|\geq R.$$}
  \item[{\bf (B)}]  \emph{Let {$k\in (d,\infty]$}. There exists a constant $\eta>0$ such that $b^{(0)}(x,\mu):= b(x,\mu)- b^{(1)}(x)$ satisfies
 $$\|b^{(0)}(\cdot,\mu)\|_\infty\le \eta \|\mu\|_{k*},\ \ \  \|b^{(0)}(\cdot,\mu)-b^{(0)}(\cdot,\nu)\|_\infty\le \eta\|\mu-\nu\|_{k*},\ \ \ \mu,\nu\in \scr P_{k*}.$$}
 \end{enumerate}

It is easy to see that when $\bb\in (0,1)$,     $b^{(0)}$ given by \eqref{0} and \eqref{00} satisfies  {\bf (B)} with $k\in (d,\ff d \bb)$.

 Let $a:=\si\si^*$.  {\bf (A)} implies $\|a\|_\infty+\|a^{-1}\|_\infty+\|\nn a\|_{\tt L^p}<\infty.$ So, by Sobolev's embedding, there exists a constant $c\in (0,\infty)$ such that
 \beq\label{HD} \|a(x)-a(y)\|^2\le c|x-y|^\aa,\ \ \ x,y\in \R^d,\ \aa:= \ff{2(p-d)}p\in (0,2).\end{equation}
 So, according to \cite[Theorem 2.1]{WSB}, where an additional time-spatial singular drift term may be included,
  {\bf (A)} implies the well-posedness and the $\W_1$-exponential ergodicity of the SDE \eqref{E},   see also \cite{Eb} for the case where   $\si=I_d$ being the $d\times d$-identity matrix. When
  {\bf (A)} holds with constant $\si$,    the exponential ergodicity in  $\W_q$ ($q\ge 1$) has been proved in \cite{LW}. Moreover,
   the exponential ergodicity in $\W_2$  is derived in \cite{W2020} for  diffusions on manifolds with negative curvature.

It is easy to see that  {\bf (A)} and {\bf (B)} imply the assumption  $(A_1)$ in \cite{HRW25}   for $p=\infty$, so that by  \cite[Theorem 2.1]{HRW25},  for any initial value $X_0$ (respectively, initial distribution $\mu$), \eqref{E0} has a unique solution, and $P_t^*\mu:=\L_{X_t}$ for $\L_{X_0}=\mu$  satisfies
\beq\label{CPK} \sup_{\mu\in \scr P} \sup_{t\in (0,T]} t^{\ff d{2k}} \|P_t^*\mu \|_{k*}<\infty,\ \ \ T\in (0,\infty),\end{equation}
\begin{equation*} \E\bigg[\sup_{t\in [0,T]} |X_t|^n\bigg]<\infty,\ \ \ \text{if}\ \E|X_0|^n<\infty,\ \ \ T\in (0,\infty),\ n\in\mathbb N.
\end{equation*}

The following result shows that  \eqref{E0} is exponential ergodic in $\W_1$ and $\|\cdot\|_{k*}$  when the interaction is  weak enough in the $k*$-distance  (i.e.  $\eta$ is  small enough).

\begin{thm}\label{T0} Assume {\bf (A)} and {\bf (B)}.
\begin{enumerate}
\item[$(1)$] Any invariant probability measure $\bar\mu$ of $\eqref{E0}$ has density $\bar\rr(x):=\ff{\bar\mu(\d x)}{\d x},$    and
\begin{equation*} \big\|\bar\rr\big\|_\infty+ \bar\mu(\e^{\theta|\cdot|^2})<\infty\ \text{for\ } \theta<K,\end{equation*}
and $ \big\|\bar\rr\big\|_\infty$ is locally bounded in $\eta \in [0,\infty)$.
\item[$(2)$] There  exists a constant $\eta_0>0$ depending only on  $(a,b^{(1)},k,d)$, such that   if  $\eta\le \eta_0$   then $\eqref{E0}$ has a unique invariant probability $\bar{\mu}$
and
\beq\label{EX1}
 \W_1(P_t^\ast\mu,\bar{\mu})\le c \e^{-\ll t}\, \W_1(\mu,\bar\mu),\ \ \ \mu\in \scr P_1,\ t\ge 0,\end{equation}
 \beq\label{EX2}  \|P_t^*\mu-\bar\mu \|_{k*}\le c \e^{-\ll t} (t\wedge1)^{-\ff 1 2-\ff d{2k}} \W_1(\mu,\bar\mu),\ \ \ \mu\in \scr P_1,\ t> 0\end{equation}
hold for some constants $c,\ll\in (0,\infty).$
\end{enumerate}
\end{thm}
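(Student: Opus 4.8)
My plan is to treat \eqref{E0} as a distribution-dependent perturbation of the classical SDE \eqref{E}, exploiting the already-established $\W_1$-exponential ergodicity of \eqref{E} under \textbf{(A)} (from \cite[Theorem 2.1]{WSB}) together with the smoothing estimate \eqref{CPK}. For part $(1)$, let $\bar\mu$ be an invariant measure. Then $\mu\mapsto b(\cdot,\bar\mu)$ is a \emph{fixed} drift, so $\bar\mu$ is an invariant measure of the classical SDE with drift $b^{(1)}+b^{(0)}(\cdot,\bar\mu)$; by \textbf{(B)} the perturbation $b^{(0)}(\cdot,\bar\mu)$ is bounded (by $\eta\|\bar\mu\|_{k*}$, which is finite since $\bar\mu=P_t^*\bar\mu$ lies in $\scr P_{k*}$ by \eqref{CPK}), so the standard regularity theory for the invariant density of a classical SDE with bounded drift and Sobolev-regular uniformly elliptic diffusion gives $\ell_{\bar\mu}\in L^\infty$, with $\|\ell_{\bar\mu}\|_\infty$ controlled in terms of $\eta\|\bar\mu\|_{k*}$ and the fixed data $(a,b^{(1)})$. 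The Gaussian-type tail bound $\bar\mu(\e^{\theta|\cdot|^2})<\infty$ for $\theta<K$ follows from a Lyapunov/It\^o argument using the dissipativity in \textbf{(A)}: apply It\^o to $\e^{\theta|X_t|^2}$, use $\langle b^{(1)}(x),x\rangle\le -K|x|^2+C$ for large $|x|$ (a consequence of the stated one-sided Lipschitz condition plus $\|b^{(0)}\|_\infty$ bounded), and take stationary expectations. To close the loop — showing $\|\ell_{\bar\mu}\|_\infty$ is \emph{locally bounded in $\eta$} — I would combine the density bound (which depends on $\eta\|\bar\mu\|_{k*}$) with a self-improving bound on $\|\bar\mu\|_{k*}$: since $\|\bar\mu\|_{k*}=\|P_t^*\bar\mu\|_{k*}\le C(t\wedge1)^{-d/(2k)}$ uniformly (from \eqref{CPK}), actually $\|\bar\mu\|_{k*}$ is bounded independently of $\eta$, so no real fixed-point is needed here.

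For part $(2)$, the heart of the matter is a contraction estimate. Fix $\mu\in\scr P_1$ and consider two processes: $X_t$ solving \eqref{E0} with $\L_{X_0}=\mu$, and $\bar X_t$ the stationary solution with $\L_{\bar X_t}=\bar\mu$. Use the synchronous (or reflection) coupling that makes \eqref{E} contracting, driven by the same Brownian motion. The drift difference along the coupling is $b^{(1)}(X_t)-b^{(1)}(\bar X_t)$ plus the perturbation $b^{(0)}(X_t,\L_{X_t})-b^{(0)}(\bar X_t,\bar\mu)$; by \textbf{(B)} the latter is bounded by $2\eta\|\L_{X_t}-\bar\mu\|_{k*}$ in sup norm. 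Following the coupling-by-reflection / concave-modified-distance technology underlying \cite[Theorem 2.1]{WSB} (and \cite{Eb}), one obtains, for a suitable concave function $\psi$ comparable to $|x-y|$,
\beq\label{PP1}
\W_1(P_t^*\mu,\bar\mu)\le c_0\e^{-\ll_0 t}\,\W_1(\mu,\bar\mu)+c_1\int_0^t \e^{-\ll_0(t-s)}\,\|P_s^*\mu-\bar\mu\|_{k*}\,\d s
\end{equation}
for constants $c_0,c_1,\ll_0>0$ coming only from $(a,b^{(1)})$, with $c_1$ proportional to $\eta$. Independently, the short-time smoothing \eqref{CPK} — or rather its gradient form, which \cite{HRW25} supplies — gives a bound of the type $\|P_t^*\mu-\bar\mu\|_{k*}\le c_2(t\wedge1)^{-\frac12-\frac{d}{2k}}\W_1(P_{(t-1)^+}^*\mu,\bar\mu)$, i.e. one converts a $\W_1$-gap into a $k*$-gap at the cost of the stated singular factor, by running the dynamics an extra unit of time and using the regularizing effect. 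Substituting this into \eqref{PP1} yields a closed Gronwall-type inequality for $u(t):=\e^{\ll_0 t}\W_1(P_t^*\mu,\bar\mu)$ with a kernel whose total mass is $O(\eta)$; for $\eta\le\eta_0$ small enough this mass is $<1$, so a Gronwall / Picard argument gives $\W_1(P_t^*\mu,\bar\mu)\le c\e^{-\ll t}\W_1(\mu,\bar\mu)$, which is \eqref{EX1}; feeding this back into the $k*$-conversion estimate gives \eqref{EX2}. Uniqueness of $\bar\mu$ is immediate from \eqref{EX1}.

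The main obstacle I anticipate is making the perturbed coupling estimate \eqref{PP1} rigorous: the reflection coupling used for \eqref{E} is tailored to the \emph{fixed} drift $b^{(1)}$ and Sobolev-class $\si$, and one must check that adding a bounded-but-only-measurable, distribution-dependent term $b^{(0)}$ does not destroy the concave-distance contraction — this requires either a Zvonkin-type transform to absorb the bad drift (as in \cite{WSB}), with the transform constructed from $b^{(1)}$ alone so that $b^{(0)}$ remains a genuinely small bounded perturbation, or a direct semimartingale estimate on $\psi(|X_t-\bar X_t|)$ controlling the extra drift term in $L^1$. A secondary technical point is the $\W_1\!\to\!k*$ conversion: one needs the gradient estimate $\|\nn P_t f\|_\infty\lesssim (t\wedge1)^{-1/2}\|f\|_{\tt L^k}\,(\dots)$ for the (time-inhomogeneous, since the drift depends on the flow of marginals) linearized semigroup, which should follow from the heat-kernel / Bismut-type estimates established in \cite{HRW25}, but the bookkeeping with the singular time factor $(t\wedge1)^{-1/2-d/(2k)}$ and the non-autonomous reference needs care. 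Everything else — the Lyapunov tail bound, the density bound, uniqueness — is routine once these two estimates are in hand.
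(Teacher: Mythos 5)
Your convergence scheme for part (2) is essentially the paper's: the paper also decomposes $\W_1(P_t^*\mu,\bar\mu)$ into a contraction term for the semigroup with frozen drift $b^{(1)}+b^{(0)}(\cdot,\bar\mu)$ plus a Duhamel perturbation bounded by $\frac{\eta}{\ll(\eta)}\int_0^t\|P_s^*\mu-\bar\mu\|_{k*}\d s$, and closes the loop with the $\W_1\to k*$ conversion \cite[(2.16)]{HRW25} exactly as in your \eqref{PP1}; your worry about the reflection coupling is also unfounded in the paper's treatment, since the bounded measurable term $b^{(0)}$ merely adds a constant to the one-sided drift estimate and is absorbed by the concave function $\psi$, with no Zvonkin transform. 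The genuine gap is that you never prove \emph{existence} of $\bar\mu$. Your entire part (2) argument couples $X_t$ with ``the stationary solution'' and then declares uniqueness ``immediate from \eqref{EX1}''; but the theorem asserts that \eqref{E0} \emph{has} an invariant probability measure, and for a McKean--Vlasov equation this is not automatic ($P_t^*$ is a nonlinear semigroup, so Krylov--Bogoliubov-type arguments do not apply, and a contraction estimate toward a measure assumed to exist is vacuous without it). The paper devotes a full step to this: it introduces the map $\Phi$ sending $\mu$ to the invariant measure of the SDE with frozen measure argument $\mu$, shows $\Phi$ preserves the set $\scr P_{k*}^{M(\eta)}\cap\scr P_1$ (Lemma 2.2), and proves $\Phi$ is contractive for small $\eta$ in the metric $\W_1+\|\cdot\|_{k*}$, using the frozen-semigroup $\W_1$-contraction together with a Duhamel estimate for $\|P_{T_\eta}^{\mu*}\Phi(\nu)-P_{T_\eta}^{\nu*}\Phi(\nu)\|_{k*}$; the fixed point is the invariant measure. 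To repair your proof you would either have to reproduce this fixed-point step, or upgrade \eqref{PP1} to a two-initial-condition contraction for $P_t^*$ and run a Cauchy-sequence argument --- neither is in your proposal.

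A secondary but real flaw is in part (1): you claim that \eqref{CPK} makes $\|\bar\mu\|_{k*}$ bounded \emph{independently of} $\eta$. It does not --- the constant in \eqref{CPK} comes from the well-posedness theorem under {\bf (A)}--{\bf (B)} and depends on $\eta$. The statement ``$\|\ell_{\bar\mu}\|_\infty$ is locally bounded in $\eta$'' requires a quantitative bound on $\|\bar\mu\|_{k*}$ with controlled $\eta$-dependence, which the paper obtains by a self-consistent Duhamel estimate (Lemma 2.3): $\|\bar\mu\|_{k*}\le c_1 t^{-d/(2k)}+C\eta\, t^{(k-d)/(2k)}\|\bar\mu\|_{k*}$, and then choosing $t=t(\eta)$ small to conclude $\|\bar\mu\|_{k*}\le c(1\vee\eta)^{d/(k-d)}$. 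Your appeal to elliptic regularity for the density itself is a legitimate alternative to the paper's $\tt L^1\to\tt L^p\to\tt L^\infty$ semigroup argument, but without the $\eta$-explicit bound on $\|\bar\mu\|_{k*}$ the local boundedness in $\eta$ does not follow. The Gaussian tail bound via a Lyapunov function is fine and matches the paper.
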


The above theorem ensures the exponential ergodicity of  singular McKean-Vlasov SDEs with small enough interaction  $\eta$. In general,   the phase transition may happen for large $\eta$ so that the ergodicity fails.
Below is a typical example of this phenomenon.

\begin{rem}
Let $d=1$, $b^{(1)}(x)=-x,$ $\sigma=\sqrt{2}$, $g(x)=\frac{x}{|x|^{1+\beta}}1_{\{x\neq 0\}}$ for some $\beta\in(0,1)$, $k\in(d,\frac{d}{\beta})$, $\hat{g}(x)=\frac{g}{\|g\|_{\tilde{L}^k}}$, $b^{(0)}(x,\mu)=b^{(0)}(\mu)=\eta \int_{\R} \hat{g}(y)\mu(\d y)$
for some $\eta>0$. Then \eqref{E0} reduces to
\begin{align}\label{kty}\d X_t=-X_t\d t+\eta\int_{\R}\hat{g}(y)\L_{X_t}(\d y)\d t+\sqrt{2}\d W_t.
\end{align}
Since $g$ is odd, it is easy to see that $\mu_0:=\frac{1}{\sqrt{2\pi}}\e^{-\frac{y^2}{2}}\d y$ is an invariant probability measure of \eqref{kty}.
By Theorem \ref{T0}(2), this  SDE is exponential ergodic when $\eta$ is small enough.

On the other hand, we have  $$\zeta(a)=\int_{\R} \hat{g}(y)\frac{1}{\sqrt{2\pi}}\e^{-\frac{|y-a|^2}{2}}\d y>0,\ \ a> 0.$$
If  $ \eta= \ff {a}{\zeta(a)}$ for some $a>0$, then $-a+\eta\zeta(a)=0$ so that letting
 $$\L_{X_t}=\mu_a:=\frac{1}{\sqrt{2\pi}}\e^{-\frac{|y-a|^2}{2}}\d y,$$    the SDE \eqref{kty} becomes
 $$\d X_t=-(X_t-a)\d t  +\sqrt{2}\d W_t,$$
 which has a unique   invariant probability measure  $\mu_a$. Thus,  if $ \eta= \ff {a}{\zeta(a)}$ for some $a>0$, then $\mu_a$ is another invariant probability measure of \eqref{kty}.

Noting that  only  if  $$\eta< \hat\eta  :=\inf_{a>0}\frac{a}{\zeta(a)},$$ the equation $-a+\eta\zeta(a)=0$
 has a unique solution $a=0$. So,    $\hat\eta> \eta_0>0.$

 It is clear that $b^{(0)}(\mu):= \eta \int_{\R} \hat{g}(y)\mu(\d y)$ is not Lipschitz continuous in $\mu$ under the Wasserstein distance, nor under the weighted variation distance, since the kernel
 $\hat{g}$ is not locally bounded.
\end{rem}

To prove the existence and uniqueness of  the invariant probability measure  of \eqref{E0}, we consider
  the following SDE with frozen distribution parameter $\mu\in \scr P_{k*}$:
\beq\label{DC} \d {X}_t^\mu=b^{(1)}({X}_t^\mu)\d t+b^{(0)}({X}_t^\mu,\mu)\d t+\si({X}_t^\mu)\d W_t,\ \ \ t\geq 0.\end{equation}
By {\bf (B)}, we have
\begin{align*}
|b^{(0)}(x,\mu)|\leq \eta\|\mu\|_{k*}<\infty,\ \ x\in\R^d,\ \mu\in \scr P_{k*}.
\end{align*}
According to  \cite[Theorem 2.1]{WSB}, this together with  {\bf (A)} implies that \eqref{DC} is well-posed and $\W_1$-exponential ergodic.  Let
$\Phi(\mu)$ be the unique invariant probability measure of \eqref{DC}, and let
 $$P_t^{\mu*}\nu=\L_{X_t^\mu}\ \text{for}\ t\ge 0 \ \text{and}\ \L_{X_0^\mu}=\nu\in \scr P.$$ Then
\beq\label{INV} P_t^{\mu*} \Phi(\mu)=\Phi(\mu),\ \ \ t\ge 0,\ \ \mu\in \scr P_{k*}.\end{equation}
Let  $P_t^\mu$ be the diffusion semigroup associated   with \eqref{DC}. Then
$$P_t^\mu f(x):=\int_{\R^d} f\d(P_t^{\mu*}\dd_x),\ \ \ t\ge 0,\ f\in \B_b(\R^d),\ x\in\R^d,$$
where $\dd_x$ is the Dirac measure at $x$. For a (real/vector/matrix-valued)  Lipschitz continuous function $f$ on $\R^d$, let
$$\|\nn f\|_\infty:=\sup_{x\ne y} \ff{|f(x)-f(y)|}{|x-y|}$$ denote its Lipschitz constant.
We have the following result  on  both $\Phi(\mu)$ and the $\W_1$-exponential ergodicity of $P_t^{\mu*}.$

 \beg{lem}\label{L1} Assume {\bf (A)} and {\bf (B)}. Then   the following assertions hold.
\begin{enumerate}
\item[$(1)$] $\Phi$ is a map from  $\scr P_{k*}$ to $\scr P_{k*}$, and  there exists a constant $c \in (0,\infty)$ depending only on $(a,b^{(1)},k,d)$ such that for any constant
$$M\ge M(\eta):= c  (1\lor \eta)^{\ff d{k-d}},$$   the set
 $$\scr P_{k*}^M:=\{\mu\in\scr P_{k*},\ \  \|\mu\|_{k*}\leq M\}$$
 is $\Phi$-invariant.
 \item[$(2)$] For any $\theta\in (0,K)$,
 $$\Phi(\mu)\big(\e^{\theta|\cdot|^2}\big)<\infty,\ \  \ \mu\in \scr P_{k*}.$$
 Consequently, $\Phi: \scr P_{k*}\to\scr P_{k*}\cap \scr P_q$ for any $q\in [1,\infty)$.
\item[$(3)$] There exists a constant
$ \ll(\eta)\in (0,\infty)$ decreasing in $ \eta>0$ such that
\begin{align*}
\W_1({P}_t^{\mu *}  \mu,P_t^{\mu*}\tt\mu)\leq \ff 1 {\ll(\eta)}  \e^{-\lambda(\eta) t}\W_1(\mu,\tt\mu),\ \ t\ge 0,\  \mu,\tt\mu\in \scr P_{1}, \mu\in \scr P_{k*}^{M(\eta)}.
\end{align*}
 Consequently, for any Lipschitz continuous function $f$,
 $$\|\nn P_t^\mu f\|_\infty\le \ff 1 {\ll(\eta)}  \e^{-\lambda(\eta) t}\|\nn f\|_\infty,\ \ \  t\ge 0,\ \mu\in \scr P_{k*}^{M(\eta)}.$$
\end{enumerate}
 \end{lem}

 \beg{proof}   Let $\mu\in \scr P_{k*}^M$.
  By Duhamel's formula,  see \cite[(5.33)]{HRW25}, we have
\beq\label{DH} P_t^\mu f=\hat P_t f+\int_0^t P_s^\mu\<b^{(0)}(\cdot,\mu),\nn \hat P_{t-s} f\>\d s,\ \ f\in \B_b(\R^d),\end{equation}
where  $\hat P_t$ is  the diffusion semigroup generated by
\beq\label{HTL} \hat L:= \ff 1 2 {\rm tr}\{a\nn^2\} + b^{(1)}\cdot\nn.\end{equation}
   By \cite[Lemma 5.3]{HRW25}, there exists a constant $c_1\in (1,\infty)$ depending only on $(a,b^{(1)})$ such that
 \beq\label{YS} \|\nn^{i} \hat P_t\|_{\tt L^{p_1}\to\tt L^{p_2}}\le c_1 t^{-\ff {d(p_2-p_1)}{2p_1p_2}-\ff i 2},\ \ t\in (0,1],\ i=0,1,\ 1\le p_1\le p_2\le \infty.\end{equation}
Below we complete the proof in four steps.

 (a)  By  \eqref{INV}, \eqref{DH}, \eqref{YS}, $\|P_t^\mu\|_{\tt L^\infty\to\tt L^\infty}=1$  and {\bf (B)}, we obtain
 \beg{align*}&\|\Phi(\mu)\|_{k*}= \sup_{\|f\|_{\tt L^k}\le 1}  |\Phi(\mu)(f)| = \sup_{\|f\|_{\tt L^k}\le 1}   |\Phi(\mu)(P_t^{\mu} f)|\\
& \le \|\hat P_t\|_{\tt L^k\to\tt L^\infty} +\int_0^t \eta \|\mu\|_{k*} \|\nn\hat P_{t-s}\|_{\tt L^k\to \tt L^\infty}\d s\\
 &\le c_1  t^{-\ff d{2k}} + c_1\eta M \int_0^t (t-s)^{-\ff 1 2 -\ff d {2k}}\d s\\
 &=  c_1  t^{-\ff d{2k}} +   \ff {2kc_1\eta M }{k-d} t^{ \ff {k-d}{2k} },\ \ t\in (0,1].\end{align*}
 This implies $\Phi(\mu)\in \scr P_{k*}.$
 By taking
 \beq\label{T}  t= t(\eta):=\Big(\ff{k-d}{4 k c_1 (1\lor\eta)}\Big)^{\ff{2k}{k-d}} \in (0,1),\end{equation}
 we derive
 $$\|\Phi(\mu)\|_{k*}\le c_1 t(\eta)^{-\ff d{2k}} + \ff 1 2 M\le M,$$
 provided
 $$M\ge M(\eta):= 2 c_1 t(\eta)^{-\ff d {2k}} = c  (1\lor\eta)^{-\ff d{k-d}},$$
 where
 $$c := 2c_1 \Big(\ff{k-d}{4 k c_1}\Big)^{-\ff d {k-d}} \in (0,\infty)$$
 depends only on $(a,b^{(1)},k,d)$.  So, for  any $M\ge M(\eta)$, we have $\Phi: \scr P_{k*}^M\to \scr P_{k*}^M.$

 (b) By {\bf (A)} and  {\bf (B)},   for any $\theta\in (0,   K) $ we find  constants $C_1,C_2\in (0,\infty)$ depending only on $(a,b^{(1)},\eta,K,\|\mu\|_{k*},\theta)$ such that
$$\d\e^{\theta |X_t^\mu|^2} \le \big(C_1- C_2 |X_t^\mu|^2\e^{\theta |X_t^\mu|^2} \big)\d t + \d M_t,\ \ t\ge 0 $$
holds for some local martingale $M_t$.
Since $\Phi(\mu)$ is the invariant probability measure of $X_t^\mu$,    this implies
$$\Phi(\mu)\big(|\cdot|^2 \e^{\theta|\cdot|^2}\big)\le \ff {C_1}{C_2}<\infty.$$
So, the second assertion holds.

(c)   By {\bf (B)}, we have
\begin{align}\label{bob}\|b^{(0)}(\cdot,\mu)\|_{\infty}\leq \eta M(\eta)=:c_0(\eta),\ \ \mu\in \scr P_{k*}^{M(\eta)}.
\end{align}
Combining this with {\bf (A)} and {\bf (B)}, we find a constant $c_2(\eta)\in (0,\infty)$ increasing in $\eta$ such that
 \beq\label{I01}  \beg{split}&
 \<b^{(1)}(x)-b^{(1)}(y)+ b^{(0)}(x,\mu)-b^{(0)}(y,\mu), x-y\> \\
 &\le c_2(\eta)|x-y| - K|x-y|^2,\ \
  \ \ \mu\in \scr P_{k*}^{M(\eta)},\ x, y\in\R^d.
\end{split} \end{equation}
On the other hand, by {\bf (A)} and \eqref{HD}, we find  constants $c_3,\bb>0$ and $\aa\in (0,2)$ such that
$a\ge 2 \bb I_d$, where $I_d$ is the $d\times d$-identity matrix, and $\hat\si:=\ss{a-\bb I_d}$ satisfies
\beq\label{I02} \|\hat \si(x)-\hat\si(y)\|_{HS}^2\le c_3 |x-y|^{\aa},\ \ \ x,y\in \R^d.\end{equation}
We now make use the following coupling with decomposition of noise introduced in \cite{PR}: for two independent $d$-dimensional Brownian  motions $B_t^1$ and $B_t^2$,
\beg{align*}  & \d  {X}_t   =\big(b^{(1)}(X_t) +b^{(0)}({X}_t, \mu)\big) \d t+ \ss{\bb}  \d B_t^1+\hat\si(X_t)  \d B_t^2,\\
& \d Y_t   = \big(b^{(1)}(Y_t) +b^{(0)}({Y}_t, \mu)\big) \d t
  +\ss{\bb} \big\{u(X_t,Y_t)1_{\{t<\tau\}}+ I_d1_{\{t\ge \tau\}}\big\}   \d B_t^1+\hat{\sigma}(Y_t) \d B_t^2,
\end{align*}
where  $X_0,Y_0$ satisfy $\L_{X_0}=\tt\mu$, $\L_{Y_0}=\mu$ and
\beq\label{I03} \E[|X_0-Y_0|]=\W_1(\mu, \tt\mu),\end{equation}
 $\tau:= \inf\{t\ge 0: X_t=Y_t\}$ is the coupling time, and
 $$u(x,y):=I_d- 2\ff{(x-y)\otimes (x-y)}{|x-y|^2},\ \ x\neq y$$
 is the mirror reflection operator.

Since $\si\si^*=a=\bb I_d+\hat\si^2,$ we find two independent $d$-dimensional Brownian motions $W_t^1$ and $W_t^2$ such that
\beg{align*}&\ss{\bb}  \d B_t^1+\hat\si(X_t)  \d B_t^2= \si(X_t)\d W_t^1,\\
&\ss{\bb} \big\{u(X_t,Y_t)1_{\{t<\tau\}}+ I_d1_{\{t\ge \tau\}}\big\}    \d B_t^1+\hat{\sigma}(Y_t) \d B_t^2=\si(Y_t)\d W_t^2.\end{align*}
By the weak uniqueness of \eqref{DC}, we have
$$\L_{X_t}= P_t^{\mu*}\tt\mu,\ \ \ \L_{Y_t}=P_t^{\mu*}\mu,$$
so that
\beq\label{I03'}\W_1(P_t^{\mu*}\mu, P_t^{\mu*}\tt\mu)\le \E[|X_t-Y_t|],\ \ \ t\ge 0.\end{equation}
Hence, to prove  assertion (3), it suffices to estimate $\E[|X_t-Y_t|].$

(d)  By \eqref{I01}, \eqref{I02} and the It\^o-Tanaka formula, we obtain
\beq\label{I04} \d |X_t-Y_t|\le \phi( |X_t-Y_t|)\d t + \d M_t,\ \ \ t\in [0,\tau),\end{equation}
where  $\phi(r):= c_2(\eta)+ c_3r^{-(1-\aa)^+} -K r,\  r>0$ and
$$\d M_t:= \Big\<\ff{X_t-Y_t}{|X_t-Y_t|},\ 2\ss\bb \d B_t^1+ \big(\hat\si(X_t)-\hat\si(Y_t)\big)\d B_t^2\Big\>$$
satisfies
\beq\label{I05} \d\<M\>_t\ge 4\bb\d t,\ \ \ t\in [0,\tau).\end{equation}
Define
$$\psi(r):=\int_0^r \e^{-\ff 1{2 \bb} \int_0^u \phi(v)\d v} \d u \int_u^\infty s \e^{\ff 1{2 \bb} \int_0^s\phi(v)\d v}\d s,\ \ \ r\ge 0.$$
We claim that
\beq\label{I06} \psi(0)=0, \ \ \psi>0,\ \ 2 \bb   \psi'' + \phi(r) \psi'(r) =-r,\ \ \psi''\le 0,\ \ \end{equation} so that by \eqref{I04}, \eqref{I05} and It\^o's formula,
\beq\label{I07} \d \psi(|X_t-Y_t|)\le - |X_t-Y_t|\d t + \psi'(|X_t-Y_t|)\d M_t,\ \ \ t\in [0,\tau).\end{equation}
The first three properties in \eqref{I06} are trivial, it suffices to verify $\psi''\le 0$. Noting  that $\phi$ is decreasing with
$$\lim_{r\to 0} \phi(r) >0,\ \ \ \lim_{r\to\infty} \phi(r)=-\infty,$$
we find  a unique constant $r_0\in (0,\infty)$ such that $\phi(r_0)=0,\ \phi|_{(0,r_0)}>0$ and $\phi|_{(r_0,\infty)}<0.$
Noting that
\beq\label{I08} \beg{split}&\psi'(r)= \e^{-\ff 1{2 \bb} \int_0^r \phi(v)\d v} \d u \int_u^\infty s \e^{\ff 1{2 \bb} \int_0^s\phi(v)\d v}\d s,\\
&\psi''(r)= -\ff 1{2 \bb} \phi(r)\psi'(r) -r,\ \ \ r>0,\end{split}\end{equation}
it is clear that $\psi''(r)\le 0$ for $r\le r_0,$   since $\psi'\ge 0$ and in this case $\phi(r)\ge 0.$ On the other hand, since
$$\ff{r}{-\phi(r)}=\ff 1 {K- c_2 r^{-1}- c_3 r^{\aa-2}},\ \ \ r>r_0$$
is decreasing, we have
\begin{align*}
&\int_r^\infty s\e^{\frac{1}{2\beta}\int_0^s\phi(v)\d v}\d s
 =\int_r^\infty \frac{2\beta s}{ \phi(s)}\left(\frac{\d}{\d s} \e^{\frac{1}{2\beta}\int_0^s\phi(v)\d v}\right)\d s\\
&= \int_r^\infty 2\bb   \e^{\frac{1}{2\beta}\int_0^s\phi(v)\d v} \ff{\d}{\d s}\Big(\ff s{-\phi(s)}\Big)\,\d s\le   -\frac{2\beta r}{ \phi(r)}\e^{\frac{1}{2\beta}\int_0^r\phi(v)\d v},\ \ r> r_0.
\end{align*}
Combining this with \eqref{I08} we conclude that $\psi''(r)<0$ for $r>r_0.$ Thus, \eqref{I06} and \eqref{I07} hold, where \eqref{I06} implies
$\psi(r)\le \psi'(0) r$ and
$$\inf_{r>0} \frac{\psi(r)}{r}=\lim_{r\to\infty}\frac{\psi(r)}{r}=\lim_{r\to \infty}\psi'(r)=\lim_{r\to \infty}\frac{\int_r^\infty s\e^{\frac{1}{2\beta}\int_0^s\phi(v)\d v}\d s}{\e^{\frac{1}{2\beta}\int_0^r\phi(v)\d v}}=\frac{2\beta}{  K},
$$
so that
\begin{align}\label{cop}\frac{2\beta}{ K } r\leq \psi(r)\leq \psi'(0)r,\ \ r\geq 0,
\end{align}
which together with  \eqref{I07} implies
$$\d\psi(|X_t-Y_t|) \le - \ff{1}{\psi'(0)}\psi(|X_t-Y_t|)+ \psi'(|X_t-Y_t|)\d M_t,\ \ \ t\in [0,\tau).$$
Noting that $X_t=Y_t$ for $t\ge \tau$, this ensures
$$\E[\psi(|X_t-Y_t|)]\le \e^{-\ff t {\psi'(0)}} \E[\psi(|X_0-Y_0|)],\ \ \ t\ge 0.$$
Combining this with \eqref{I03}, \eqref{I03'} and \eqref{cop}, and noting that
$$\psi'(0)= \int_0^\infty s \e^{\ff 1{2 \bb} \int_0^s\phi(v)\d v}\d s $$
is increasing in $\eta$ since so is $c_2(\eta)$  in the definition of $\phi$, we finish the proof.
 \end{proof}

Next,  we have the following result on  the invariant probability measure of $P_t^*.$

 \beg{lem}\label{L0} Assume {\bf (A)} and {\bf (B)}. Then there exists a constant $c \in (0,\infty)$ only depending on $(a,b^{(1)})$ such that any invariant probability measure
 $\bar\mu$ of $P_t^*$ satisfies
\beq\label{BM}  \|\bar\mu\|_{k*}\le M(\eta):=c(1\lor \eta)^{\ff d{k-d}},\  \    \  \bar\mu(\e^{\theta |\cdot|^2})<\infty \ \text{for}\    \theta<  K,\end{equation}

 \end{lem}

 \beg{proof}  Since $P_t^*\bar\mu =\bar\mu,$ by \eqref{CPK} we obtain $\|\bar\mu\|_{k*}<\infty$.
 By \eqref{DH} for $\mu=\bar\mu$ and  combining with  $\bar\mu(P_t^{\bar \mu} f)=\bar\mu(f)$, {\bf (B)}  and \eqref{YS}, we obtain
\beg{align*}&\|\bar\mu\|_{k*}= \sup_{\|f\|_{\tt L^k}\le 1}  |\bar\mu(f)| = \sup_{\|f\|_{\tt L^k}\le 1}   |\bar\mu(P_t^{\bar\mu} f)|\\
& \le \|\hat P_t\|_{\tt L^k\to\tt L^\infty} +\int_0^t \eta \|\bar\mu\|_{k*} \|\nn\hat P_{t-s}\|_{\tt L^k\to \tt L^\infty}\d s\\
 &\le c_1  t^{-\ff d{2k}} + c_1\eta\|\bar\mu\|_{k*} \int_0^t (t-s)^{-\ff 1 2 -\ff d {2k}}\d s\\
 &=  c_1  t^{-\ff d{2k}} +    \ff {2kc_1\eta}{k-d} t^{ \ff 1 2- \ff d {2k}} \|\bar\mu\|_{k*},\ \ t\in (0,1].\end{align*}
Taking  $t=t(\eta)$  in \eqref{T}, we derive
$$\|\bar\mu\|_{k*}\le 2 c_1  t(\eta)^{-\ff d{2k}}= M(\eta).$$
This together with Lemma \ref{L1}(2) finishes the proof.
 \end{proof}

Consider  the decoupled SDE for $\mu\in \scr P$ and $s\ge 0$:
\begin{equation*} \d \bar X_{s,t}^\mu= b(\bar X_{s,t}^\mu, P_t^*\mu)\d t +\si(\bar X_{s,t}^\mu) \d W_t,\ \ t\ge s.
\end{equation*}
Under {\bf (A)} and {\bf (B)} this SDE is well-posed. For any $\nu\in \scr P$, let
$$\bar P_{s,t}^{\mu *}\nu:=\L_{\bar X_{s,t}^\mu}\ \text{for}\ t\ge s\ \text{and}\   \L_{\bar X_{s,s}^\mu}=\nu.$$
Let $\bar X_{s,t}^{\mu,x}$ be the unique solution to this SDE with $\bar X_{s,s}^{\mu,x}=x$, the associated diffusion semigroup is defined as
$$\bar P_{s,t}^\mu f(x)= \E[f(\bar X_{s,t}^{\mu,x})],\ \ t\ge s,\ f\in \B_b(\R^d).$$
 Simply denote $\bar P_{t}^\mu=\bar P_{0,t}^\mu$ and $\bar P_{t}^{\mu *}=\bar P_{0,t}^{\mu *}$.
Then
\beq\label{LX} P_t^*\mu= \bar P_t^{\mu*}\mu,\ \ \ \bar\mu= P_t^*\bar\mu = \bar P_t^{\bar\mu*}\bar\mu= P_t^{\bar\mu*}\bar\mu.\end{equation}
By Duhamel's formula, see \cite[(5.33)]{HRW25}, we have
 \begin{equation*} \bar P_{s,t}^\mu f =\hat P_{t-s} f+ \int_s^t \bar P_{s,r}^\mu \<b^{(0)}(\cdot, P_r^*\mu), \nn\hat P_{t-s}f\> \d r,\ \ t\ge s\ge 0,\ f\in\B_b(\R^d).
 \end{equation*}

 \beg{proof}[Proof of Theorem \ref{T0}]
   By Lemma \ref{L0},  we have $\bar\mu(\e^{\theta|\cdot|^2})<\infty$ for $\theta<K.$ So, for the first assertion  it suffices to show
 \beq\label{KO} \sup_{\|f\|_{\tt L^1}\le 1} \bar\mu(|f|)\le H(\eta) \end{equation}
 for some increasing function $H: [0,\infty)\to (0,\infty)$,
 which is equivalent to $\|\bar\rr\|_\infty\le H(\eta)$. In the following, we prove \eqref{KO} and   the second assertion respectively.

(a)  By \cite[Lemma 3.3]{HRW25}, {\bf (A)}, {\bf (B)} and \eqref{BM} imply that for any $p>1$ there exists a constant $c(p,\eta)\in (0,\infty)$ increasing in $\eta$ such that
 $$\|P_t^{\bar\mu}\|_{\tt L^p\to\tt L^\infty}\le c(p,\eta) t^{-\ff{d}{2p}},\ \ \|P_t^{\bar\mu}\|_{\tt L^p\to\tt L^p}\le c(p,\eta),\ \  \  t\in (0,1].$$
  Choosing $p>1$ such that $\ff{d(p-1)}{p}<1$, by combining this with {\bf (B)}, \eqref{DH} and  \eqref{YS},    we obtain
  \beg{align*}&\|P_t^{\bar\mu}\|_{\tt L^1\to \tt L^p}\le \|\hat P_t\|_{\tt L^1\to\tt L^p}+\|b^{(0)}(\cdot,\bar\mu)\|_\infty \int_0^t  \|P_s^{\bar\mu}\|_{\tt L^p\to\tt L^p} \|\nn \hat P_{t-s}\|_{\tt L^1\to\tt L^p}\d s\\
 &\le c_1 t^{-\ff{d(p-1)}{2p}} +  \eta\|\bar\mu\|_{k*}  c(p,\eta)c_1  \int_0^t   (t-s)^{-\ff 1 2-\ff {d(p-1)}{2p}} \d s<\infty,\ \ \ t\in (0,1].\end{align*}
 Noting that $P_2^{\bar\mu*}\bar\mu=\bar\mu$, we drive
 $$\sup_{\|f\|_{\tt L^1}\le 1} \bar\mu(|f|) = \sup_{\|f\|_{\tt L^1}\le 1} \bar\mu(P_2^{\bar\mu} |f|)\le \|P_{1}^{\bar\mu} P_{1}^{\bar\mu} \|_{\tt L^1\to\tt L^\infty} \le \|P_{1}^{\bar\mu} \|_{\tt L^p\to\tt L^\infty} \|P_{1}^{\bar\mu} \|_{\tt L^1\to\tt L^p}.$$
  So, \eqref{KO} holds for {$H(\eta):=c(p,\eta)\left(c_1  +  \eta\|\bar\mu\|_{k*}  c(p,\eta)c_1  \int_0^1   s^{-\ff 1 2-\ff {d(p-1)}{2p}} \d s\right)$.}

 It remains to find a constant $\eta_0>0$ depending only on $(a,b^{(1)}, k,d)$ such that the second assertion hold.
 All constants below depend only on $(a,b^{(1)},k,d,\eta)$, but we denote them by $c_i(\eta)$ for $i\ge 0$  to emphasize  the dependence on $\eta$ such that   $\eta_0$ can be determined accordingly.

(b) Existence and uniqueness of $\bar\mu$. By Lemma \ref{L1}, we have
 $$\Phi:\ \scr P_{k*}^{M(\eta)}\cap \scr P_1\to \scr P_{k*}^{M(\eta)}\cap \scr P_1.$$
It suffices to show that $\Phi$ is contractive on  $\scr P_{k*}^{M(\eta)}\cap \scr P_1$ under the complete metric
$$\W(\mu,\nu):= \|\mu-\nu\|_{k*}+\W_1(\mu,\nu).$$
 By \cite[(2.16)]{HRW25}, there exists a constant $c_0(\eta)\in [1,\infty)$ increasing in $\eta$ such that
\begin{align}\label{kgy}\|P_1^{\mu*}\mu- P_1^{\mu*}\tt\mu\|_{k*}\le c_0(\eta) \W_1(\mu,\tt\mu),\ \ \mu\in \scr P_{k*}^{M(\eta)}.
\end{align}
Since $\ll(\eta)$ is decreasing in $\eta$, there exists a unique  $T_\eta\in [1,\infty)$ which is increasing in $\eta$ such that
\begin{equation*}  \ff {1+c_0(\eta)} {\ll(\eta)}  \e^{-\lambda(\eta) (T_\eta-1)} =\ff 1 2,
\end{equation*}   so that Lemma \ref{L1}(3) and \eqref{kgy} imply
\begin{equation*}  \|P_{T_\eta}^{\mu*}\mu- P_{T_\eta}^{\mu*}\tt\mu\|_{k*} + \W_1(P_{T_\eta}^{\mu*}\mu, P_{T_\eta}^{\mu*}\tt\mu ) \le \ff 1 2 \W_1(\mu,\tt\mu),\ \ \mu,\tt\mu\in \scr P_{k*}^{M(\eta)}\cap \scr P_1.
\end{equation*}
Noting that $\Phi(\mu)\in  \scr P_{k*}^{M(\eta)}\cap \scr P_1$ for $\mu\in \scr P_{k*}^{M(\eta)},$ we obtain
\beq\label{13} \beg{split} &\W(\Phi(\mu),\Phi(\nu))= \W(P_{T_\eta}^{\mu*}\Phi(\mu), P_{T_\eta}^{\nu*}\Phi(\nu))\\
&\le  \W(P_{T_\eta}^{\mu*}\Phi(\mu), P_{T_\eta}^{\mu*}\Phi(\nu))+  \W(P_{T_\eta}^{\mu*}\Phi(\nu), P_{T_\eta}^{\nu*}\Phi(\nu))\\
&\le \ff 1 2\W_1(\Phi(\mu),\Phi(\nu))+ \W(P_{T_\eta}^{\mu*}\Phi(\nu), P_{T_\eta}^{\nu*}\Phi(\nu)),\ \ \ \mu,\nu\in \scr P_{k*}^{M(\eta)}.\end{split}\end{equation}
According to \cite[Lemma 5.3]{HRW25},  from   {\bf (A)} and \eqref{bob}  we find a constant $c_1(\eta)\in (0,\infty)$ increasing in $\eta$ such that
 \beq\label{ES3} \beg{split}&\|\nabla^i {P}_{t}^\mu\|_{\tt L^{p_1}\to\tt L^{p_2}}\le c_1(\eta)    t^{-\frac{i}{2}-\ff{d(p_2-p_1)}{2p_1p_2}},\\
&\quad \ \ t\in (0,T_\eta],\ \ \mu\in\scr P_{k*}^{M(\eta)},\ \ i=0,1,\ 1\le p_1\le p_2\le\infty. \end{split} \end{equation}

 By {\bf (B)}, \eqref{ES3}  and the Duhamel formula  \cite[(5.33)]{HRW25}, we obtain
 \beq\label{14} \beg{split}  & \|P_{T_\eta}^{\mu*} \Phi(\nu)- P_{T_\eta}^{\nu*} \Phi(\nu)\|_{k*}=\sup_{\|f\|_{\tt L^k}\le 1} \big|\Phi(\nu)(P_{T_\eta}^\mu f- P_{T_\eta}^\nu f)\big| \\
 &\le\sup_{\|f\|_{\tt L^k}\le 1}\int_0^{T_\eta} \big\|P_s^\mu\<b^{(0)}(\cdot,\mu)- b^{(0)}(\cdot,\nu),\ \nn P_{t-s}^\nu f\>\big\|_\infty\d s\\
 &\le \eta \|\mu-\nu\|_{k*} \int_0^{T_\eta} c_1(\eta) t^{-\ff 1 2-\ff d{2k}}\d t= \eta \ff{2kc_1(\eta)}{k-d}T_\eta^{\ff{k-d}{2k}}\|\mu-\nu\|_{k*}.\end{split}\end{equation}
 On the other hand, by Lemma \ref{L1}(3) and Kantonovich dual formula, we derive
\beg{align*}\nonumber & \W_1(P_{T_\eta}^{\mu*} \Phi(\nu), P_{T_\eta}^{\nu*} \Phi(\nu)) =\sup_{\|\nn f\|_\infty\le 1} \big|\Phi(\nu)(P_{T_\eta}^\mu f- P_{T_\eta}^\nu f)\big| \\
 &\le \sup_{\|\nn f\|_\infty\le 1}\int_0^{T_\eta} \big\|P_s^\mu\<b^{(0)}(\cdot,\mu)- b^{(0)}(\cdot,\nu),\ \nn P_{t-s}^\nu f\>\big\|_\infty\d s\\
\nonumber & {\le  \eta    \|\mu-\nu\|_{k*}\int_0^{T_\eta}\ff 1 {\ll(\eta)}  \e^{-\lambda(\eta) t}\d t\leq \eta\frac{T_\eta}{\lambda(\eta)}    \|\mu-\nu\|_{k*}}.
\end{align*}
Combining this with \eqref{14}, we derive
$$\W(P_{T_\eta}^{\mu*} \Phi(\nu), P_{T_\eta}^{\nu*} \Phi(\nu))\le \eta \max\Big\{\ff{2kc_1(\eta)}{k-d}T_\eta^{\ff{k-d}{2k}}, {\frac{T_\eta}{\lambda(\eta)} } \Big\}\W(\mu,\nu).$$
Since the upper bound is increasing in $\eta$ and goes to $0$ as $\eta\to 0$,  we find a constant $\eta_0>0$ depending only on $(a,b^{(1)},k,d)$ such that
 $$\W(P_{T_\eta}^{\mu*} \Phi(\nu), P_{T_\eta}^{\nu*} \Phi(\nu))\le \ff 1 4 \W(\mu,\nu),\ \ \eta\in (0,\eta_0],\ \mu,\nu\in \scr P_{k*}^{M(\eta)}.$$
 This together with \eqref{13} implies that $\Phi$ is $\W$-contractive when $\eta\le \eta_0.$

 (c) $\W_1$-exponential ergodicity.  By Lemma \ref{L1}(3), it holds   \beq\label{EN1} \beg{split}&\W_1(P_t^*\mu,\bar\mu)=  \W_1(\bar P_t^{\mu*}\mu,P_t^{\bar\mu*}\bar\mu)\\
 &\le  \W_1(\bar P_t^{\mu*}\mu, P_t^{\bar\mu*}\mu)+  \W_1(P_t^{\bar\mu*}\mu, P_t^{\bar\mu*}\bar\mu)\\
 &\le \W_1(\bar P_t^{\mu*}\mu, P_t^{\bar\mu*}\mu)+ \ff 1 {\ll(\eta)}  {\e^{-\ll(\eta)t}\W_1(\mu,\bar\mu),\ \ t\ge 0},\ \ \mu\in\scr P_1,\end{split}\end{equation}
 where $\ll(\eta)$ is decreasing in $\eta$. Letting   $\tilde{T}_\eta\in (0,\infty)$ be  increasing in $\eta$  such that
 $$\ff 1 {\ll(\eta)} \e^{-\ll(\eta)\tilde{T}_\eta}=\ff 1 4,$$
 we derive
\beq\label{EN2} \W_1(P_t^*\mu,\bar\mu)
 \le  \W_1(\bar P_t^{\mu*}\mu, P_t^{\bar\mu*}\mu)  + {\ff 1 4\W_1(\mu,\bar\mu),\ \ t\ge \tilde{T}_\eta}. \end{equation}
 By Lemma \ref{L1}, we have
 $$\|\nn P_t^{\bar\mu*} f\|_\infty\le \ff 1 {\ll(\eta)}\e^{-\ll(\eta)t}  \|\nn f\|_\infty,\ \ t\ge 0.$$
 Combining this with  {\bf (B)} and  {the Duhamel formula}, we find a constant $\kk_1(\eta)\in (0,\infty)$ increasing in $\eta$ such that
 \beg{align*} & \W_1(\bar P_t^{\mu*} \mu, P_t^{\bar \mu*} \mu)= \sup_{\|\nn f\|_\infty\le 1} |\mu(\bar P_t^\mu f-P_t^{\bar \mu}f)| \\
& { \le \sup_{\|\nn f\|_\infty\le 1}\int_0^{t} \big\|\bar{P}_s^\mu\<b^{(0)}(\cdot,P_s^*\mu)- b^{(0)}(\cdot,\bar{\mu}),\ \nn P_{t-s}^{\bar{\mu}} f\>\big\|_\infty\d s}\\
 &\le \ff 1 {\ll(\eta)} \int_0^t \|b^{(0)}(\cdot,P_s^*\mu)- b^{(0)}(\cdot,\bar\mu)\|_\infty \d s\le  \ff \eta {\ll(\eta)} \int_0^t \|P_s^*\mu-\bar\mu\|_{k*} \d s,\ \ t\geq 0.\end{align*}
 On the other hand, under  {\bf (A)} and {\bf (B)}, the estimate \cite[(2.16)]{HRW25} holds for  $T=(2\tilde{T}_\eta)\vee1$, which implies
\beq\label{PLN} \|P_{t}^*\mu-\bar\mu\|_{k*}= \|P_{t}^*\mu-P_t^{*} \bar\mu\|_{k*}\le \kk_1(\eta) t^{-\ff 1 2-\ff d{2k}}\W_1(\mu,\bar\mu),\ \ \  t\in (0, (2\tilde{T}_\eta)\vee 1]\end{equation}
for some  constant $\kk_1(\eta)\in (0,\infty)$  increasing in $\eta$.
 Thus, we find a constant $\kk_2(\eta)\in (0,\infty)$ which is increasing in $\eta$ such that
 \beq\label{N3} \W_1(\bar P_t^{\mu*} \mu, P_t^{\bar \mu*} \mu) \le  \eta \kk_2(\eta) \W_1(\mu,\bar\mu),\ \ t\in [0,2 \tilde{T}_\eta].\end{equation}
 Combining \eqref{N3} with \eqref{EN1} we derive
 $$\sup_{t\in [0, 2\tilde{T}_\eta]} \W_1(P_t^*\mu,\bar\mu) \le \left( \eta \kk_2(\eta) +\ff 1 {\ll(\eta)}\right) \W_1(\mu,\bar\mu).$$
 Moreover,   by taking   $\eta_0>0$ such that  {$\eta_0 \kk_2(\eta_0)\le \ff 1 4$},  we deduce from \eqref{N3}  and  \eqref{EN2} that $\eta\le \eta_0$ implies
 $$\W_1(P_t^*\mu,\bar\mu)\le \ff 1 2 {\W_1(\mu,\bar\mu),\ \ \ t\in [\tilde{T}_\eta, 2\tilde{T}_{\eta}]}.$$
 Therefore, there exist constants $c,\ll>0$ such that \eqref{EX1} holds.
By combining \eqref{EX1}  with \eqref{PLN} and the semigroup property of $P_t^*$, we derive
 \beg{align*}&\|P_t^*\gg-\bar\mu\|_{k*}=\|P_{1}^*P_{t-1}^*\gg-\bar\mu\|_{k*}\\
 &\le \kk_1(\eta)     \W_1(P_{t-1}^*\gg,\bar\mu) \le \kk_1(\eta)   c\e^{-\ll (t-1)} \W_1(\gg,\bar\mu),\ \ \ t>1,\end{align*}
which together with \eqref{PLN} for $t\le 1$ implies    \eqref{EX2}  for the same $\ll>0$ but a  different constant  $c >0$.
   \end{proof}

\section{Exponential ergodicity in $\W_2$ and $\Ent$}

To derive the exponential ergodicity in $2$-Wasserstein distance, we consider
\beq\label{BN} b^{(0)}(x,\mu)=\int_{\R^d}h(x-y)\mu(\d y),\ \ \mu\in\scr P_{k*}\end{equation}  for some $k\in(d,\infty]$ and
  $h:\R^d\to\R^d$ with $\|h\|_{ \tilde{L}^{k}}<\infty.$

We make the following assumption.

\beg{enumerate}
\item[{\bf(C)}]  \emph{ Let $b(x,\mu) = b^{(0)}(x,\mu)+ b^{(1)}(x).$   }
\item[$(C_1)$] \emph{$\si$ is differentiable,    $\nn \si$ is  H\"older continuous, and
 $$\|\si\|_\infty+\|\si^{-1}\|_\infty+\|\nn \si\|_\infty+\|\nn b^{(1)}\|_\infty<\infty.$$}
 \item[$(C_2)$]  \emph{There exist constants $K\in (0,\infty)$ and $R\in [0,\infty)$ such that
$$\ff 1 2 \|\si(x)-\si(y)\|_{HS}^2 + \<b^{(1)}(x)-b^{(1)}(y),x-y\>\leq -K|x-y|^2\ \text{if}\  |x-y|\geq R.$$}
\item[$(C_3)$]
   \emph{ $b^{(0)}$ is given by \eqref{BN} with $\eta:=\|h\|_{ \tilde{L}^{k}}<\infty$ for some $k\in (d,\infty].$   }\end{enumerate}

Obviously, {\bf (C)} implies   {\bf (A)} and {\bf (B)} with the same constant $\eta$, so that  Theorem \ref{T0} applies.

 \begin{thm}\label{T0z} Assume {\bf (C)} and let $\eta_0,\ll>0$ be in Theorem $\ref{T0}$.
Then the following assertions hold.
\begin{enumerate}
\item[$(1)$] If $R=0$,  then there exists a constant $\bar{\eta}_0\in (0,\eta_0]$ depending only on $(a,b^{(1)},k,d)$ such that when $\eta\in(0,\bar{\eta}_0]$,
\begin{align*}\W_2(P_t^\ast\mu,\bar{\mu})^2+ \mathrm{Ent}(P_t^\ast\mu|\bar{\mu})\leq c\e^{-2\lambda t}\W_2(\mu,\bar{\mu})^2,\ \ \mu\in\scr P_2,t\geq 1
\end{align*} holds for some constant  $c \in (0,\infty)$.
\item[$(2)$] If  $\sigma$ is constant, then there exists a constant $C \in (0,\infty)$   such that  when $\eta\le \eta_0$,  $\bar\mu$   satisfies the log-Sobolev inequality
\beq\label{LS} \bar\mu(f^2\log f^2) \le C  \bar\mu(|\nn f|^2),\ \ \ f\in C_b^1(\R^d),\ \bar\mu(f^2)=1.\end{equation}
Moreover, there exists a  constant $\bar{\eta}_0\in (0,\eta_0]$ depending only on $(a,b^{(1)},k,d)$    such that  when $\eta\in(0,\bar{\eta}_0]$,
\begin{equation*}\beg{split}&\W_2(P_t^\ast\mu,\bar{\mu})^2+ \mathrm{Ent}(P_t^\ast\mu|\bar{\mu})\\
&\leq c \e^{-2\lambda t}\min\big\{\W_2(\mu,\bar\mu)^2,\mathrm{Ent}(\mu|\bar{\mu})\big\},\ \ \mu\in\scr P_2,t\geq 1\end{split}
\end{equation*}  holds for some constant  $c\in (0,\infty)$.
\end{enumerate}
\end{thm}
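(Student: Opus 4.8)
The idea is to bootstrap from Theorem \ref{T0}. For part (1) I would first upgrade the $\W_1$-exponential ergodicity to a $\W_2$-exponential ergodicity and then add the relative-entropy term via the entropy-cost inequality of \cite{HRW25}. For part (2) I would exploit that with $\si$ constant the symmetrised Dirichlet form of the associated linear diffusion is the clean form $\ff12\int\<a\nn f,\nn f\>\d\bar\mu$, which simultaneously gives a log-Sobolev inequality for $\bar\mu$ and drives an entropy-dissipation estimate; the $\W_2$-part is then recovered from the log-Sobolev (Talagrand) inequality.

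\textbf{Part (1).} \emph{Step 1 ($\W_2$-ergodicity).} When $R=0$, $(C_2)$ reads $\ff12\|\si(x)-\si(y)\|_{HS}^2+\<b^{(1)}(x)-b^{(1)}(y),x-y\>\le-K|x-y|^2$ for all $x,y$, so the synchronous coupling makes $\d X=b^{(1)}(X)\d t+\si(X)\d W$ exponentially $\W_2$-contractive. To put back the bounded but non-Lipschitz term $b^{(0)}(\cdot,\mu)$, which for the measures that actually occur ($\bar\mu$, $P_t^*\nu$ with $t>0$, and the fixed-point iterates $\Phi(\nu)$, all of which have bounded H\"older densities with uniform Gaussian tails by Theorem \ref{T0}(1)) is $C^\gamma$ with H\"older seminorm $O(\eta)$, I would use a Zvonkin transform $\Theta_\mu=\mathrm{id}+u_\mu$, with $u_\mu$ solving $(\lambda_0-L^\mu)u_\mu=b^{(0)}(\cdot,\mu)$ for the generator $L^\mu$ of \eqref{DC} and a fixed $\lambda_0$. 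Then $\|u_\mu\|_{C^{2,\gamma}}=O(\eta)$, so $\Theta_\mu$ is $C^2$-close to the identity, the transformed equation $\d Y=\tilde b_\mu(Y)\d t+\tilde\si_\mu(Y)\d W$ has Lipschitz coefficients with $\tilde b_\mu=(b^{(1)}+\lambda_0 u_\mu)\circ\Theta_\mu^{-1}$ (the singular part removed), and a computation analogous to \eqref{I01}--\eqref{I02} shows $\ff12\|\tilde\si_\mu(y)-\tilde\si_\mu(y')\|_{HS}^2+\<\tilde b_\mu(y)-\tilde b_\mu(y'),y-y'\>\le-(K-O(\eta))|y-y'|^2$. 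Hence for $\eta$ small the frozen semigroups $P_t^{\mu*}$ are $\W_2$-exponentially contractive, and one re-runs the fixed-point/comparison argument of the proof of Theorem \ref{T0} with $\W_2$ in place of $\W_1$ (the $\|\cdot\|_{k*}$-part, the cross-term bounds by $\eta\|\cdot\|_{k*}$, and \eqref{PLN}, \eqref{EX2} being unchanged), obtaining $\bar\mu\in\scr P_2$ and $\W_2(P_t^*\mu,\bar\mu)\le c\,\e^{-\ll t}\W_2(\mu,\bar\mu)$. \emph{Step 2 (entropy).} By the entropy-cost inequality $\Ent(P_t^*\mu|P_t^*\nu)\le\Psi(t)\W_2(\mu,\nu)^2$ ($t\in(0,1]$) of \cite{HRW25}, applied with $\mu$ replaced by $P_{t-1}^*\mu$ and $\nu=\bar\mu$ and combined with Step 1, $\Ent(P_t^*\mu|\bar\mu)=\Ent\big(P_1^*(P_{t-1}^*\mu)\,\big|\,P_1^*\bar\mu\big)\le\Psi(1)\W_2(P_{t-1}^*\mu,\bar\mu)^2\le c\,\e^{-2\ll t}\W_2(\mu,\bar\mu)^2$ for $t\ge1$; adding Step 1 gives (1).

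\textbf{Part (2).} \emph{Log-Sobolev inequality.} Here $\bar\mu$ is the invariant measure of the linear diffusion with generator $\hat L^{\bar\mu}:=\ff12{\rm tr}\{a\nn^2\}+(b^{(1)}+b^{(0)}(\cdot,\bar\mu))\cdot\nn$, $a$ constant; by Theorem \ref{T0}(1), $\|\ell_{\bar\mu}\|_\infty<\infty$ and $\bar\mu(\e^{\theta|\cdot|^2})<\infty$ for $\theta<K$, and since the drift has at most linear growth the stationary Fokker--Planck equation forces a lower bound $\ell_{\bar\mu}\ge c_1\e^{-c_2|\cdot|^2}$. Because $\si$ is constant, $P_t^{\bar\mu}$ satisfies a Wang-type Harnack inequality (cf. \cite{WSB,HRW25}); integrating it against $\bar\mu$ and using these density bounds gives hyperboundedness of $P_t^{\bar\mu}$ on $L^2(\bar\mu)$, which together with the spectral gap provided by the exponential ergodicity of Theorem \ref{T0} yields \eqref{LS} by the classical hyperboundedness$\Rightarrow$log-Sobolev route (a defective log-Sobolev inequality whose defect is removed by the Poincar\'e inequality). \emph{Convergence estimate.} Since $a$ is constant, along $\rho_t:=P_t^*\mu$ the antisymmetric (drift) part of $\hat L^{\bar\mu}$ does not contribute to the entropy derivative, so with $g_t:=\log(\rho_t/\ell_{\bar\mu})$, $\ff{\d}{\d t}\Ent(\rho_t|\bar\mu)=-\ff12\int\<a\nn g_t,\nn g_t\>\d\rho_t+\int\<b^{(0)}(\cdot,\rho_t)-b^{(0)}(\cdot,\bar\mu),\nn g_t\>\d\rho_t$; bounding the last term by Young's inequality and {\bf (B)} and then applying \eqref{LS} gives $\ff{\d}{\d t}\Ent(\rho_t|\bar\mu)\le-\lambda_0\Ent(\rho_t|\bar\mu)+C\eta^2\|\rho_t-\bar\mu\|_{k*}^2$ for some $\lambda_0>0$. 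Feeding in $\|P_t^*\mu-\bar\mu\|_{k*}\le c\,\e^{-\ll t}\W_1(\mu,\bar\mu)$ for $t\ge1$ from \eqref{EX2}, and the entropy-cost inequality at $t=1$, Gr\"onwall yields $\Ent(P_t^*\mu|\bar\mu)\le c\,\e^{-2\ll t}\W_2(\mu,\bar\mu)^2$ for $t\ge1$, after choosing $\bar\eta_0$ small and, if necessary, shrinking $\ll$ so that $\lambda_0\ge2\ll$. Finally, \eqref{LS} implies the Talagrand inequality $\W_2(\cdot,\bar\mu)^2\le C\,\Ent(\cdot|\bar\mu)$, which converts $\W_2(\mu,\bar\mu)^2$ on the right into $C\,\Ent(\mu|\bar\mu)$ (hence the $\min$) and recovers $\W_2(P_t^*\mu,\bar\mu)^2\le C\,\Ent(P_t^*\mu|\bar\mu)$ on the left.

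\textbf{Expected main obstacle.} In part (1) the sticking point is obtaining genuine $\W_2$-contraction of the frozen semigroups despite the non-Lipschitz interaction: one must exploit that $b^{(0)}(\cdot,\mu)$ is H\"older for the relevant density-carrying measures and then verify that the Zvonkin change of variables -- which improves the drift but a priori worsens the diffusion -- preserves the uniform $\si$-dissipativity of $(C_2)|_{R=0}$ with a constant bounded away from $0$. In part (2) the hard step is the log-Sobolev inequality for the non-reversible, non-gradient measure $\bar\mu$, which is precisely where the constant-$\si$ hypothesis and the Harnack/hyperboundedness machinery are essential.
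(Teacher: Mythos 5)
The central gap is the regularity input you take for granted. Both your part (1) (the Zvonkin/coupling step) and your part (2) (the hyperboundedness/Harnack step and the dissipation bound) hinge on $b^{(0)}(\cdot,\mu)$ being H\"older, in fact Lipschitz, with seminorm $O(\eta)$ for the measures $\bar\mu$, $P_t^*\nu$, $\Phi(\nu)$, and you attribute this to Theorem \ref{T0}(1). But Theorem \ref{T0}(1) only gives $\|\ell_{\bar\mu}\|_\infty<\infty$ and Gaussian integrability; a convolution $h*\rho$ with $h\in\tilde L^k$ and $\rho$ merely bounded need not be H\"older, let alone with small seminorm. Producing this regularity is precisely the content of the paper's Lemma \ref{hap} and Lemma \ref{regb0}: the stationary density is represented through the Duhamel formula for the kernel of $P_t^{\bar\mu}$, the H\"older and gradient Gaussian bounds for $\hat p_t$ from \cite{MPZ} give H\"older continuity of $\bar\rr$, and the gradient bound for $p_1^{\bar\mu}$ together with $P_1^{\bar\mu*}\bar\mu=\bar\mu$ upgrades this to the Lipschitz bound \eqref{hob} with constant $c\eta$. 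Without that lemma your Schauder estimate $\|u_\mu\|_{C^{2,\gamma}}=O(\eta)$ has no basis (a bounded measurable source only gives $W^{2,p}$, which does not control $\nn\tilde\si_\mu$ in sup norm, so the claimed dissipativity $-(K-O(\eta))$ of the transformed coefficients is unsupported), and the $\W_2$-contraction step collapses. Incidentally, once \eqref{hob} is available the Zvonkin transform and the re-run of the fixed point in $\W_2$ are unnecessary: the paper simply couples synchronously the dynamics with drift $b^{(0)}(\cdot,P_t^*\mu)$ against the one with $b^{(0)}(\cdot,\bar\mu)$, absorbs the measure difference through $\eta\|P_t^*\mu-\bar\mu\|_{k*}$ and the smoothing estimates (2.16), (2.18) of \cite{HRW25}, and iterates; your Step 2 (entropy via the entropy-cost inequality) does coincide with the paper's argument.

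In part (2) there is a second genuine gap: you remove the defect in the log-Sobolev inequality using ``the spectral gap provided by the exponential ergodicity of Theorem \ref{T0}''. Theorem \ref{T0} gives $\W_1$-exponential ergodicity of the nonlinear flow (and Lemma \ref{L1}(3) a $\W_1$-contraction of the frozen semigroup); neither yields an $L^2(\bar\mu)$ Poincar\'e inequality for $P_t^{\bar\mu}$ without further work. This is exactly where the paper has to argue: it first gets a defective log-Sobolev inequality from hyperboundedness plus the Bakry--\'Emery semigroup estimate (both again requiring the Lipschitz bound \eqref{hob}), and then removes the defect via the local boundedness of $\log\bar\rr$ from \cite{BKR}, the weak Poincar\'e inequality of \cite{RW01}, and \cite[Proposition 4.1.2]{Wbook}; your asserted Gaussian lower bound on $\ell_{\bar\mu}$ is likewise unproved and is not what the paper uses. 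Finally, your entropy-dissipation/Gr\"onwall computation is only formal for this singular McKean--Vlasov flow (regularity of $\rho_t$ and the integrations by parts need justification); the paper sidesteps it by combining the LSI-driven exponential entropy decay of $P_t^{\bar\mu*}$, the $\W_2\!\to\!\Ent$ regularization of \cite[Lemma 4.2]{YZ}, a synchronous-coupling comparison of $P_t^*\mu$ with $P_t^{\bar\mu*}\mu$, and the Talagrand inequality from \eqref{LS}. So while the overall architecture (frozen semigroup comparison, entropy-cost inequality, LSI $\Rightarrow$ Talagrand) is aligned with the paper, the two key lemmas that make it work are missing from your proposal.
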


\beg{rem}  When $\sigma$ depends on the spatial variable, even in the case $b^{(0)}=0$, it is open whether $(C_1)$-$(C_2)$ is sufficient to ensure the log-Sobolev inequality for the invariant probability measure.
To establish the log-Sobolev inequality for non-constant $\si$, one may apply  Bakry-\'{E}mery's  curvature condition \cite{BE} which is less explicit in higher dimensions, or apply the dimension-free Harnack inequality
\cite{W11}.  Since these conditions are less explicit and hard to be modified to McKean-Vlasov SDEs, here we simply consider constant $\si$.

\end{rem}

To prove Theorem \ref{T0z}, a key step is to show the Lipschitz continuity of
$b^{(0)}(\cdot,\bar{\mu}).$  To this end,  we need to characterize the regularity of $\bar\rr(x)=\ff{\bar\mu(\d x)}{\d x}.$ Since $\bar\mu$ is $P_t^{\bar\mu}$-invariant, and by Duhamel's formula
\eqref{DH} the semigroup $P_t^{\bar\mu}$ can be represented by using $\hat P_t$, which is the diffusion semigroup generated by  $\hat L$   in \eqref{HTL}.
So,  we first study the heat kernel $\hat{p}_t(x,y) $ of  $\hat P_t$, which is the probability density function such that
$$\hat P_tf(x):=\int_{\R^d} f(y)\hat p_t(x,y)\d y,\ \ \ t>0,\ x\in \R^d,\ f\in\B_b(\R^d).$$
According to  {\cite[Theorem 1.2(i)]{MPZ}}, $\hat p_t$ is comparable with the Gaussian heat kernel
$$g_\kappa (t,x,y)=(\kappa^{-1}\pi t)^{-\frac{d}{2}}\exp\left[\frac{-\kappa |y-\psi_t(x)|^2}{ t}\right],\ \ t>0,\ x,y\in\R^d$$
for some constant $\kk\in (0,\infty)$, where  $\psi_t(x)$ is the unique solution to the ODE
$$\d \psi_t(x)=b^{(1)}(\psi_t(x))\d t, \ \ \psi_0(x)=x.$$ Let
\beg{align*} P_t^\kappa f(x):=\int_{\R^d}g_\kappa (t,x,y)f(y)\d y,\ \ f\in\scr B_b(\R^d),\ x\in\R^d,\ t> 0.\end{align*}   By the proof of \cite[Lemma 5.3]{HRW25}, there exists a constant
$c_1\in (0,\infty)$ such that
\beq\label{YS1} \|P_t^\kappa\|_{\tt L^{p_1}\to\tt L^{p_2}}\le c_1 t^{-\ff {d(p_2-p_1)}{2p_1p_2}},\ \ t\in (0,1],\ \ 1\le p_1\le p_2\le \infty.\end{equation}
We have the following result, where $\nn_x^0$ is the identity operator and $\nn_x^1$ is the gradient operator in $x\in\R^d$.

 \begin{lem}\label{hap} Assume {\bf (C)}. Then   there exist constants $c,\kk>0$ such that
$$ \big|\nn_x^i\hat{p}_t(x,y_1)-\nn_x^i\hat{p}_t(x,y_2)\big|\leq  c\frac{|y_1-y_2|^\aa}{t^{\frac{\aa+i}2}}\big(g_\kk (t,x,y_1)+g_\kk (t,x,y_2)\big),\ \ i=0,1$$ holds for any
$ t,\aa\in(0,1]$ and $x,y_1,y_2\in\R^d.$
 \end{lem}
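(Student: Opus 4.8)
The plan is to derive the stated Hölder-type gradient estimate on $\hat p_t$ from the Gaussian two-sided bounds of \cite[Theorem 1.2(i)]{MPZ} together with standard parabolic interior regularity, all rescaled to the correct powers of $t$. First I would recall the Gaussian upper/lower bounds $\hat p_t(x,y)\asymp g_\kk(t,x,y)$ (possibly with different constants $\kk$ above and below), and the companion gradient bound $|\nn_x\hat p_t(x,y)|\le c\,t^{-1/2}g_\kk(t,x,y)$; both follow from {\bf (C)} ($C_1$ gives Hölder continuity of $a=\si\si^*$ and of $b^{(1)}$, so the coefficients satisfy the standing hypotheses of \cite{MPZ}). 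Next, to get the $y$-Hölder modulus I would use the parabolic equation in the \emph{backward} variable $y$: for fixed $x$ the function $(s,y)\mapsto \hat p_{t-s}(x,y)$ solves (a version of) the adjoint equation, so the Krylov–Safonov / Schauder interior estimate on a parabolic cylinder of size $\ss t$ around $y_1$ yields, after the parabolic scaling $y\mapsto y_1+\ss t\,y'$, $t\mapsto t\,s'$,
$$\big|\nn_x^i\hat p_t(x,y_1)-\nn_x^i\hat p_t(x,y_2)\big|\le c\,\frac{|y_1-y_2|^\aa}{t^{(\aa+i)/2}}\,\sup_{z}\,g_\kk(t,x,z),$$
where the sup is over $z$ in a ball of radius, say, $2|y_1-y_2|$ around $y_1$. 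I would split into the two regimes $|y_1-y_2|\le \ss t$ and $|y_1-y_2|>\ss t$: in the first, the scaled estimate applies directly and $g_\kk(t,x,z)\le C\big(g_{\kk'}(t,x,y_1)+g_{\kk'}(t,x,y_2)\big)$ for all $z$ in the relevant ball by the elementary Gaussian inequality (adjusting $\kk$); in the second, one simply bounds $|\nn_x^i\hat p_t(x,y_1)-\nn_x^i\hat p_t(x,y_2)|\le |\nn_x^i\hat p_t(x,y_1)|+|\nn_x^i\hat p_t(x,y_2)|\le c\,t^{-i/2}\big(g_\kk(t,x,y_1)+g_\kk(t,x,y_2)\big)$ and absorbs the loss into $(|y_1-y_2|/\ss t)^\aa\ge 1$.

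The only point requiring a little care is that $\theta_t(x)$, not $x$, is the Gaussian centre, so the ``elementary Gaussian inequality'' above must be applied with the flow already built in: since $\theta_t$ is bi-Lipschitz uniformly in $t\in(0,1]$ (because $\|\nn b^{(1)}\|_\infty<\infty$ by $(C_1)$), the map $y\mapsto |y-\theta_t(x)|$ is comparable to the Euclidean distance to $\theta_t(x)$ and the two-point comparison $g_\kk(t,x,z)\le C\big(g_{\kk'}(t,x,y_1)+g_{\kk'}(t,x,y_2)\big)$ for $|z-y_1|\le 2|y_1-y_2|$ goes through verbatim. One should also note the restriction $t\in(0,1]$: for $t\in(0,1]$ the coefficients' Hölder norms are controlled by $(C_1)$ and the Schauder constants are uniform, which is exactly the range in which the lemma is stated and in which it is used (via the Duhamel representation \eqref{DH} on short time intervals) in the sequel.

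The main obstacle I expect is not any single estimate but the bookkeeping of the Gaussian constant $\kk$: each step (upper bound, lower bound, gradient bound, interior regularity, two-point comparison, flow distortion) may shrink $\kk$, and one must check these degradations are all by fixed factors depending only on $(a,b^{(1)},d)$, so that a single $\kk\in(0,\infty)$ and a single $c\in(0,\infty)$ serve in the final statement for all $i\in\{0,1\}$, all $t,\aa\in(0,1]$, and all $x,y_1,y_2$. A secondary technical point is justifying the interior parabolic regularity for $\nn_x^i\hat p_t$ in the $y$-variable simultaneously for $i=0$ and $i=1$; for $i=1$ one differentiates the equation in $x$ first (legitimate by $(C_1)$, since $\nn\si$ is Hölder, so $\nn_x\hat p_t$ itself satisfies a parabolic equation with Hölder coefficients and a Gaussian bound), then reruns the same Schauder argument in $y$.
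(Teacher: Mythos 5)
Your overall architecture for $i=0$ (a Lipschitz-type bound on the regime $|y_1-y_2|\le\ss t$, the trivial sup bound on the complementary regime, then interpolation of the two to reach an arbitrary exponent $\aa$) is exactly the skeleton of the paper's argument, and your Gaussian recentring remark is handled there by an explicit quadratic-form computation. The gap is in how you produce the Lipschitz-in-$y$ modulus. The estimate is claimed for \emph{every} $\aa\in(0,1]$, in particular $\aa=1$, so Krylov--Safonov can only give some small exponent and is useless here, while interior Schauder applied to the equation satisfied by $y\mapsto\hat p_t(x,y)$ is the \emph{forward} (adjoint) equation, whose non-divergence rewriting has coefficients involving $\nn^2 a$ and $\nn b^{(1)}$; assumption {\bf (C)} only gives $\nn\si$ H\"older and $\nn b^{(1)}$ bounded, so the Schauder hypotheses are not available. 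What is actually needed is the pointwise Gaussian bound on the gradient in the \emph{forward} variable, $|\nn_y\hat p_t(x,y)|\le c\,t^{-1/2}g_{2\kk}(t,x,y)$ (MPZ, Theorem 1.2(iv)), and for $i=1$ the corresponding mixed difference estimate (MPZ, Lemma A.1(C1)), which the paper simply cites. You only quote the gradient bound in $x$, which controls the wrong variable and cannot be converted into a $y$-modulus by the route you describe.

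A second, independent obstruction to your local-regularity route is the unboundedness of $b^{(1)}$: on a parabolic cylinder of size $\ss t$ centred at an arbitrary $y_1$, the rescaled drift has size of order $\ss t\,|b^{(1)}(y_1)|$, which is not uniformly bounded over $y_1\in\R^d$, so the interior Schauder/KS constants would not be uniform and the single constant $c$ in the statement would be lost. This is precisely why the Gaussian bounds are centred along the flow $\theta_t(x)$ and are proved in \cite{MPZ} by a parametrix frozen along the flow; invoking those ready-made $\nn_y$ and mixed estimates (as the paper does), rather than redoing local regularity in $y$, is what makes the proof go through under {\bf (C)}. (Minor point: for $i=1$ there is no need to ``differentiate the equation in $x$''; in the forward equation $x$ is a parameter, so $\nn_x\hat p_t(x,\cdot)$ satisfies the same forward equation — but this does not repair the two issues above.)
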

\begin{proof} The desired estimate for $i=1$ is included in \cite[Lemma A.1(C1)]{MPZ}, while that for $i=0$ follows from    \cite[Theorem 1.2(i),(iv)]{MPZ}, which ensures
\begin{align}\label{gry}
|\nabla_y^i\hat{p}_t(x,y)|\leq ct^{-\frac{i}{2}}g_{2\kk}(t,x,y),\ \ i=0,1,\ \ t\in(0,1],\ x,y\in\R^d
\end{align} for some constants $c,\kk\in (0,\infty).$ Indeed, \eqref{gry} with $i=1$
implies that for some constant $c_1\in [c,\infty)$  \begin{align*}
&|\hat{p}_t(x,y_1)-\hat{p}_t(x,y_2)|=\left|\int_0^1\<y_1-y_2, \nabla_y\hat{p}_t(x,y_2+s(y_1-y_2))\>\d s\right|\\
&\leq c\frac{|y_1-y_2|}{t^{\frac{1}{2}}}\int_0^1 g_{2\kk}(t,x,y_2+s(y_1-y_2))\d s\\
&\leq  \frac{c_1|y_1-y_2|}{t^{\frac{1}{2}}} g_\kk(t,x,y_2),\ \ \text{if}\ |y_1-y_2|^2\le t,
\end{align*} where the last step follows from the fact
\beg{align*}& \ff {2\kk} t |\psi_t(x)- sy_1-(1-s)y_2|^2 \\
&= \ff {2\kk} t\Big(|\psi_t(x)-y_2|^2 + 2\<\psi_t(x)-y_2,\ s(y_2-y_1)\big\>+s^2|y_2-y_1|^2\Big)\\
&\ge \ff \kk t |\psi_t(x)-y_2|^2 -  \ff{\kk s^2}t |y_2-y_1|^2 \\
&\ge \ff \kk t |\psi_t(x)-y_2|^2 - \kk,\ \ \ s\in [0,1],\ |y_1-y_2|^2\le t.\end{align*}
On the other hand, by \eqref{gry} for $i=0$, we find a constant $c_2\in [c,\infty)$ such that
\beq\label{LOI} \beg{split} |\hat{p}_t(x,y_1)-\hat{p}_t(x,y_2)|&\leq c(g_{2\kk}(t,x,y_1)+g_{2\kk}(t,x,y_2))\\
&\le c_2 (g_{\kk}(t,x,y_1)+g_{\kk}(t,x,y_2)\big),\end{split}\end{equation}
  so that  for $|y_1-y_2|^2>t,$
\beq\label{LX1} |\hat{p}_t(x,y_1)-\hat{p}_t(x,y_2)|\leq \ff{c_2|y_1-y_2|} {t^{\ff 1 2}} (g_\kk(t,x,y_1)+g_\kk(t,x,y_2)).\end{equation}
In conclusion, \eqref{LX1}  holds in any case, which together with \eqref{LOI} yields  \beg{align*} &|\hat{p}_t(x,y_1)-\hat{p}_t(x,y_2)|=|\hat{p}_t(x,y_1)-\hat{p}_t(x,y_2)|^\aa|\hat{p}_t(x,y_1)-\hat{p}_t(x,y_2)|^{1-\aa} \\
&\le \Big(\frac{c_1|y_1-y_2|}{t^{\frac{1}{2}}}\big(g_{\kk}(t,x,y_1)+g_\kk(t,y_2)\Big)^\aa\Big(c_2(g_\kk(t,x,y_1)+g_\kk(t,x,y_2))\Big)^{1-\aa}\\
& \le \frac{(c_1\vee c_2)|y_1-y_2|^\aa}{t^{\frac{\aa}2}}\big(g_\kk (t,x,y_1)+g_\kk (t,x,y_2)\big),\ \ \ t,\aa\in (0,1],\ x,y_1,y_2\in\R^d.\end{align*}
\end{proof}

\begin{lem}\label{regb0}
Assume {\bf (C)} and let $\eta_0$ be in Theorem \ref{T0}. Then there exists a constant  $c\in (0,\infty)$
such that when $\eta\in (0,\eta_0]$,
\beq\label{hob} |b^{(0)}(x,\bar{\mu})-b^{(0)}(y,\bar{\mu})|\leq c\eta  (1\land  |x-y|),   \ \ x,y\in\R^d.
\end{equation}
 \end{lem}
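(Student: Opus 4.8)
The plan is to split \eqref{hob} into the regime $|x-y|\ge1$, where it follows from a crude bound, and $|x-y|\le1$, where it reduces to $\|\nn b^{(0)}(\cdot,\bar\mu)\|_\infty\le c\eta$; this latter bound will come from a two-step regularity bootstrap for the invariant density $\bar\rho:=\ell_{\bar\mu}$. For $|x-y|\ge1$, assumption $(C_3)$ and Lemma \ref{L0} give $\|b^{(0)}(\cdot,\bar\mu)\|_\infty\le\eta\|\bar\mu\|_{k*}\le\eta M(\eta)\le M(\eta_0)\eta$ when $\eta\le\eta_0$, hence $|b^{(0)}(x,\bar\mu)-b^{(0)}(y,\bar\mu)|\le 2M(\eta_0)\eta=2M(\eta_0)\eta(1\land|x-y|)$.

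For the regularity of $\bar\rho$, I would start from the identity obtained by combining $\bar\mu(P_t^{\bar\mu}f)=\bar\mu(f)$ (see \eqref{LX}) with the Duhamel formula \eqref{DH} for $\mu=\bar\mu$, rewritten in terms of densities and the heat kernel $\hat p_t$ of $\hat P_t$: for fixed $t\in(0,1]$,
$$\bar\rho(y)=\int_{\R^d}\hat p_t(z,y)\bar\rho(z)\,\d z+\int_0^t\int_{\R^d}\<b^{(0)}(z,\bar\mu),\nn_z\hat p_{t-s}(z,y)\>\bar\rho(z)\,\d z\,\d s,$$
valid also for the unbounded $f=h(x-\cdot)$ because $h(x-\cdot)\in L^1(\bar\mu)$, as $\|\bar\rho\|_\infty<\infty$ by Theorem \ref{T0}(1) and $\bar\mu(|h(x-\cdot)|)\le C\|h\|_{\tt L^k}\|\bar\rho\|_{\tt L^{k'}}$ by the Hölder inequality $\|fg\|_{\tt L^1}\le\|f\|_{\tt L^k}\|g\|_{\tt L^{k'}}$ with $\ff1k+\ff1{k'}=1$. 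Applying Lemma \ref{hap} to control the $y$-modulus of continuity of $\hat p_t(z,\cdot)$ and of $\nn_z\hat p_{t-s}(z,\cdot)$, using $\int\bar\rho(z)g_\kk(r,z,y)\,\d z\le C\|\bar\rho\|_\infty$ for $r\in(0,1]$ (change of variable along the flow $\theta_r$, whose Jacobian is bounded since $\|\nn b^{(1)}\|_\infty<\infty$), and $\|b^{(0)}(\cdot,\bar\mu)\|_\infty\le M(\eta_0)\eta$, one deduces $\bar\rho\in C^\aa(\R^d)$ for every $\aa\in(0,1)$, with $[\bar\rho]_{C^\aa}$ bounded uniformly in $\eta\le\eta_0$; the time integral $\int_0^t(t-s)^{-(1+\aa)/2}\d s$ converges precisely because $\aa<1$. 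Combining $\bar\rho\in C^\aa$ with $\bar\mu(\e^{\theta|\cdot|^2})<\infty$, $\theta<K$, a short argument (a value $\bar\rho(x_0)=\dd$ forces $\bar\rho\ge\dd/2$ on a ball of radius $\sim\dd^{1/\aa}$ about $x_0$, on which $\bar\mu$ carries mass at least $c\dd^{1+d/\aa}$, so $c\dd^{1+d/\aa}\le\e^{-\theta|x_0|^2/4}\bar\mu(\e^{\theta|\cdot|^2})$ for $|x_0|\ge1$ and $\dd$ small) upgrades this to a pointwise Gaussian bound $\bar\rho(x)\le C\e^{-\theta'|x|^2}$ with $C,\theta'$ uniform for $\eta\le\eta_0$.

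Next, writing $b^{(0)}(x,\bar\mu)=\int h(u)\bar\rho(x-u)\,\d u$, I would estimate the increment $b^{(0)}(x,\bar\mu)-b^{(0)}(x',\bar\mu)=\int h(u)(\bar\rho(x-u)-\bar\rho(x'-u))\,\d u$ by splitting over $\{|x-u|\le R\}$ and $\{|x-u|>R\}$ (with $|x-x'|\le R/2$): on the near part, $|\bar\rho(x-u)-\bar\rho(x'-u)|\le[\bar\rho]_{C^\aa}|x-x'|^\aa$ and $\int_{|x-u|\le R}|h(u)|\,\d u\le CR^d\eta$ (cover by $O(R^d)$ unit balls, on each $\|1_Bh\|_{L^1}\le C\|1_Bh\|_{L^k}\le C\eta$); on the far part, the Gaussian bound on $\bar\rho$ gives $\int_{|x-u|>R}|h(u)|(\bar\rho(x-u)+\bar\rho(x'-u))\,\d u\le C\eta\,\e^{-cR^2}$, uniformly in $x$. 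Optimizing $R\sim(\log(1/|x-x'|))^{1/2}$ yields $b^{(0)}(\cdot,\bar\mu)\in C^{\aa'}(\R^d)$ for some $\aa'\in(0,1)$, with $[b^{(0)}(\cdot,\bar\mu)]_{C^{\aa'}}\le C\eta$ for $\eta\le\eta_0$. Consequently $b(\cdot,\bar\mu)=b^{(1)}+b^{(0)}(\cdot,\bar\mu)\in C^{\aa'}$, $a=\si\si^*\in C^{1,\aa'}$ is uniformly elliptic by $(C_1)$, and $\bar\rho$ solves the stationary Fokker--Planck equation $\ff12\sum_{i,j}\pp_i\pp_j(a_{ij}\bar\rho)=\sum_i\pp_i(b_i(\cdot,\bar\mu)\bar\rho)$ with right-hand side the divergence of the $C^{\aa'}$ field $b(\cdot,\bar\mu)\bar\rho$; interior Schauder estimates, together with the translation-uniformity of the data and $\|\bar\rho\|_\infty$ bounded for $\eta\le\eta_0$, give $\|\nn\bar\rho\|_\infty\le C$ uniformly for $\eta\le\eta_0$. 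Finally, since $\int|h(u)|\sup_{|\xi|\le1}|\nn\bar\rho(x-\xi-u)|\,\d u\le C_d\|h\|_{\tt L^k}\|\nn\bar\rho\|_{\tt L^{k'}}<\infty$, one differentiates under the integral to get $\nn b^{(0)}(x,\bar\mu)=\int h(u)\nn\bar\rho(x-u)\,\d u$, whence $\|\nn b^{(0)}(\cdot,\bar\mu)\|_\infty\le C_d\|h\|_{\tt L^k}\|\nn\bar\rho\|_{\tt L^{k'}}\le C_d|B(0,1)|^{1/k'}\|\nn\bar\rho\|_\infty\,\eta\le c\eta$; combining with the $|x-y|\ge1$ bound gives $|b^{(0)}(x,\bar\mu)-b^{(0)}(y,\bar\mu)|\le\min\{\|\nn b^{(0)}(\cdot,\bar\mu)\|_\infty|x-y|,\,2\|b^{(0)}(\cdot,\bar\mu)\|_\infty\}\le c\eta(1\land|x-y|)$.

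The hard part is precisely that boundedness of $b^{(0)}(\cdot,\bar\mu)$ is immediate but useless for Lipschitz continuity, so one must run the bootstrap $\bar\rho\in C^\aa\Rightarrow b^{(0)}(\cdot,\bar\mu)\in C^{\aa'}\Rightarrow\bar\rho\in C^{1,\aa'}\Rightarrow\|\nn b^{(0)}(\cdot,\bar\mu)\|_\infty\le c\eta$. The first implication genuinely needs the heat-kernel representation and Lemma \ref{hap}, since a naive convolution estimate is blocked by the fact that $\tt L^k$-functions carry no modulus of continuity under translation. The delicate bookkeeping is to keep the far-field (exponential-moment) estimate uniform in the base point $x$, and to check that every constant — in particular the final Lipschitz constant — stays bounded by a multiple of $\eta$ for $\eta\le\eta_0$.
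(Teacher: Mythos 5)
Your first stage coincides with the paper's: the identity $\bar\rho(y)=\int\bar\rho(x)\hat p_1(x,y)\d x+\int_0^1\int\bar\rho(x)\<b^{(0)}(x,\bar\mu),\nabla_x\hat p_s(x,y)\>\d x\,\d s$ obtained from invariance plus Duhamel, combined with Lemma \ref{hap}, does give H\"older continuity of $\bar\rho$ and then of $b^{(0)}(\cdot,\bar\mu)$, exactly as in the paper's step (1). The divergence, and the gap, is in how you convert this into a \emph{Lipschitz} bound with the factor $\eta$. You differentiate $\bar\rho$ itself (stationary Fokker--Planck plus interior Schauder) and then conclude via
$\int_{\R^d}|h(u)|\,|\nabla\bar\rho(x-u)|\,\d u\le C_d\|h\|_{\tilde L^k}\|\nabla\bar\rho\|_{\tilde L^{k'}}\le C_d\|\nabla\bar\rho\|_\infty\,\eta$. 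That inequality is false as stated: the $\tilde L^k$- and $\tilde L^{k'}$-norms are \emph{localized} (sup over unit balls), and a product of localized norms cannot control a global integral --- take $h$ and $|\nabla\bar\rho|$ both bounded away from zero at infinity (note $(C_3)$ does not force $h\in L^1(\R^d)$), and the left-hand side is $+\infty$ while the right-hand side is finite. The same misuse appears, harmlessly, in your earlier claim $\bar\mu(|h(x-\cdot)|)\le C\|h\|_{\tilde L^k}\|\bar\rho\|_{\tilde L^{k'}}$, where the correct bound is simply $\le\|h\|_{\tilde L^k}\|\bar\mu\|_{k*}$ by the definition of the $k*$-norm. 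At the final step, however, the error is not cosmetic: with only $\|\nabla\bar\rho\|_\infty<\infty$ the convolution defining $\nabla b^{(0)}(\cdot,\bar\mu)$ need not even converge, so you would need summable (e.g.\ Gaussian) decay of $|\nabla\bar\rho|$. Your Schauder step does not deliver this: the local data norm $\|b(\cdot,\bar\mu)\bar\rho\|_{C^{\alpha'}(B(x_0,2))}$ contains the term $\|b^{(1)}\|_{L^\infty(B(x_0,2))}[\bar\rho]_{C^{\alpha'}}$, which grows linearly in $|x_0|$ because your H\"older seminorm of $\bar\rho$ is only a global constant; to get decay of $\nabla\bar\rho$ you would have to redo the heat-kernel estimate to show that the \emph{local} H\"older seminorm of $\bar\rho$ decays Gaussianly, and track uniformity in $\eta$, none of which is in the proposal (your pointwise Gaussian bound on $\bar\rho$ alone is not enough).

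The paper avoids this entirely by never differentiating $\bar\rho$: after the H\"older estimate on the frozen drift $\bar b=b^{(1)}+b^{(0)}(\cdot,\bar\mu)$, it invokes the forward-variable gradient estimate $|\nabla_y p_1^{\bar\mu}(x,y)|\le c\,g_\kappa(1,x,y)$ from \cite[Theorem 1.2(iv)]{MPZ} for the kernel of $P_t^{\bar\mu}$, writes $b^{(0)}(x,\bar\mu)=\int\int h(z)p_1^{\bar\mu}(y,x-z)\d z\,\bar\mu(\d y)$ using $P_1^{\bar\mu*}\bar\mu=\bar\mu$, and puts the derivative on the kernel; the Gaussian bound then supplies exactly the global integrability your argument lacks, and the $k*$-duality $\|\bar\mu\|_{k*}\|h\|_{\tilde L^k}\|P_1^\kappa\|_{\tilde L^k\to\tilde L^k}$ produces the factor $\eta$. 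So your plan is repairable in principle (decay of $\nabla\bar\rho$ plus an annulus decomposition of the convolution), but as written the last step does not go through, and this is precisely the step the lemma is about.
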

\begin{proof} Recall $\bar\rr(x)=\ff{\bar\mu(\d x)}{\d x}.$ Since {\bf (C)} implies {\bf (A)} and {\bf (B)} for  $\eta:=\|h\|_{\tt L^k}\le \eta_0$,
 by    Theorem \ref{T0}  we find a constant $c_0\in (0,\infty)$ such that
\beq\label{CB}\sup_{\eta\le \eta_0} \|\bar\rr\|_\infty\le c_0,\ \ |b^{(0)}(x,\bar{\mu})| \le \eta_0 \|\bar\mu\|_{k*}\le c_0.\end{equation}
Since $\bar\mu$ is the invariant probability measure for the diffusion semigroup $P_t^{\bar\mu}$, we have
\beq\label{OU2} \bar \rr(y)= \int_{\R^d} \bar\rr(z)\bar p_t(z,y)\d z,\ \ \ t>0,\ y\in\R^d,\end{equation}
where $\bar p_t$ is the heat kernel of $P_t^{\bar\mu}.$
On the other hand, by  Duhamel's formula \eqref{DH} and
$$\nn \hat P_{t}f(x)= \nn_x\int_{\R^d} \hat p_t(x,y)f(y)\d y= \int_{\R^d} \big(\nn_x\hat p_{t}(x,y) \big) f(y)\d y,\ \ t\in(0,1],f\in\scr B_b(\R^d)$$   due to \eqref{gry},
we derive
 {\beg{align*} &\int_{\R^d} \bar p_t(z,y)f(y)\d y=  \int_{\R^d} \hat p_t(z,y)f(y)\d y +\int_0^t \int_{\R^d} \bar p_s(z,x)  \big\<b^{(0)}(x,\bar\mu), \nn \hat P_{t-s} f(x)\big\>\d x\d s  \\
&= \int_{\R^d}f(y) \bigg( \hat p_t(z,y) + \int_0^t \int_{\R^d} \bar p_s(z,x) \big\<b^{(0)}(x,\bar\mu), \nn_x \hat p_{t-s} (x,y)\big\>\d x\d s\bigg)\d y,\ \ f\in \B_b(\R^d).\end{align*}
So,
$$\bar p_t(z,y) = \hat p_t(z,y)+ \int_0^t \int_{\R^d} \bar p_s(z,x) \big\<b^{(0)}(x,\bar\mu), \nn_x \hat p_{t-s} (x,y)\big\>\d x\d s\bigg),\ \ \ t\in(0,1],\ x,y\in\R^d.$$}
Combining this with \eqref{OU2}, we obtain
 \begin{align*}
\bar{\rho}(y)&=\int_{\R^d}\bar{\rho}(x)\hat{p}_1(x,y)\d x+\int_0^1\d s\int_{\R^d}\bar{\rho}(x)\<b^{(0)}(x,\bar{\mu}),\nabla_x\hat{p}_{1-s}(x,y)\>\d x\\
&=\int_{\R^d}\bar{\rho}(x)\hat{p}_1(x,y)\d x+\int_0^1\d s \int_{\R^d}\bar{\rho}(x)\<b^{(0)}(x,\bar{\mu}),\nabla_x\hat{p}_{s}(x,y)\>\d x.
\end{align*}
Combining these with Lemma \ref{hap} and \eqref{CB},  we find  a constant  $c_1 \in (0,\infty)$ such that
\begin{align*}
&|\bar{\rho}(y_1)-\bar{\rho}(y_2)|\\
&\leq\int_{\R^d}\bar{\rho}(x)|\hat{p}_1(x,y_1)-\hat{p}_1(x,y_2)|\d x\\
&+\int_0^1\d s \int_{\R^d}\bar{\rho}(x)|b^{(0)}(x,\bar{\mu})||\nabla_x\hat{p}_{s}(x,y_1)-\nabla_x\hat{p}_{s}(x,y_2)| \d x\d s\\
&\le c_1 |y_1-y_2|^\aa\int_{\R^d}  \big(g_\kk (1,x,y_1)+g_\kk (1,x,y_2)\big)\bar\mu(\d x)\\
&\quad +c_1 |y_1-y_2|^\aa\int_0^1 s^{-\frac{1+\aa}{2}}\d s \int_{\R^d} \big(g_\kk (s,x,y_1)+g_\kk (s,x,y_2)\big)\bar\mu(\d x).
\end{align*}
Consequently,
  \begin{align*}
&|b^{(0)}(y_1,\bar{\mu})-b^{(0)}(y_2,\bar{\mu})|\le
  \int_{\R^d}|h|(z)|\bar{\rho}(y_1-z)-\bar{\rho}(y_2-z)|\d z\\
&\leq c_1 |y_1-y_2|^\aa\int_{\R^d\times\R^d}|h|(z) \{g_\kk (1,x,y_1-z)+g_\kk (1,x,y_2-z)\}\bar\mu(\d x)\d z\\
&+c_1 |y_1-y_2|^\aa\int_0^1  s^{-\frac{1+\aa}{2}}\d s \int_{\R^d\times\R^d }|h|(z)  \{g_\kk (s,x,y_1-z)+g_\kk (s,x,y_2-z)\}\bar\mu(\d x) \d z\\
& =   c_1 |y_1-y_2|^\aa\bar\mu\big( P_1^\kk[ |h(y_1-\cdot)|  + |h(y_2-\cdot)|] \big)\\
& \qquad +\int_0^1 s^{-\ff{1+\aa}2}\bar\mu\big( P_s^\kk |h(y_1-\cdot)|  + P_s^\kk  |h(y_2-\cdot)| \big) \d s\\
&\leq 2c_1 |y_1-y_2|^\aa \|\bar\mu\|_{k*} \|h\|_{\tt L^k} \bigg(\|P_1^\kk\|_{\tt L^k\to\tt L^k}  +\int_0^1 s^{-\ff{1+\aa}2}\| P_s^\kk\|_{\tt L^k\to\tt L^k} \d s\bigg).
\end{align*}
Combining this with  $\|h\|_{\tt L^k} =\eta\le\eta_0$, \eqref{BM} and \eqref{YS1} for $p_1=p_2=k$, we find a constant $c_2\in (0,\infty)$ such that
\begin{align}\label{hob'}|b^{(0)}(x,\bar{\mu})-b^{(0)}(y,\bar{\mu})|\leq \ff{c_2   |y_1-y_2|^\aa}{1-\aa},   \ \ x,y\in\R^d,\ \aa\in (0,1), \eta\in(0,\eta_0].
\end{align}

 (2) Since $b^{(1)}$ is Lipschitz continuous ensured by {\bf (C)},    \eqref{hob'} implies that
 $$\bar b(x):= b^{(0)}(x,\bar\mu)+ b^{(1)}(x)$$ satisfies
  \begin{align*}
 |\bar b (x)-\bar b(y)|
 \leq \ff{c}{1-\aa}  (|x-y|^\aa\vee|x-y|),\ \ \eta\in(0,\eta_0],\aa\in(0,1).
\end{align*}
According to \cite[Theorem 1.2(iv)]{MPZ}, this together with the conditions on $\si$ included in {\bf (C)}  implies
\begin{align}\label{nay}|\nabla_y p_1^{\bar{\mu}}(x,y)|\leq c_3  g_{\kk}(1,x,y),\ \ \eta\in(0,\eta_0],\ x,y\in\R^d
\end{align}
for some constants $c_3,\kk \in (0,\infty)$.
By $P_1^{\bar{\mu}*}\bar{\mu}=\bar{\mu}$ we have
\begin{align*}
b^{(0)}(x,\bar{\mu})=b^{(0)}(x,P_1^{\bar{\mu}*}\bar{\mu})&=\int_{\R^d\times\R^d} h(x-z)p_1^{\bar{\mu}}(y,z)\d z\bar{\mu}(\d y)\\
&=\int_{\R^d\times\R^d} h(z)p_1^{\bar{\mu}}(y,x-z)\d z\bar{\mu}(\d y).
\end{align*}
Combining this with  $\|h\|_{\tt L^k} =\eta\le\eta_0$, \eqref{BM}, \eqref{YS1} for $p_1=p_2=k$ and \eqref{nay}, we find a constant $c \in (0,\infty)$ such that
$$ |b^{(0)}(\cdot,\bar\mu)|\le \|h\|_{\tt L^k}\|\bar\mu\|_{k*}\le c\eta,\ \ \ \eta\in [0,\eta_0]$$ and
\begin{align*}|\nabla b^{(0)}(\cdot,\bar{\mu})(x)|&\leq \left|\int_{\R^d}\int_{\R^d}h(z)\nabla p_1^{\bar{\mu}}(y,\cdot)(x-z)\d z\bar{\mu}(\d y)\right|\\
&\leq c_3  \int_{\R^d}\int_{\R^d}|h|(z) g_{\kk}(1,y,x-z)\d z\bar{\mu}(\d y) \\
&= c_3 \int_{\R^d}\int_{\R^d}|h|(x-z) g_{\kk}(1,y,z)\d z\bar{\mu}(\d y) \\
&\leq c_3 \|\bar{\mu}\|_{k*}\|h\|_{\tilde{L}^k}\|P_1^\kk\|_{\tt L^k\to\tt L^k} \leq c\eta,\ \ \eta\in [0,\eta_0].
\end{align*}
 Therefore,  we finish the proof.
\end{proof}

We are now ready to prove Theorem \ref{T0z} in two different situations.

\beg{proof}[Proof of Theorem $\ref{T0z}(1)$] Let $\mu\in \scr P_2$ and $R=0$.
Take $X_0, Y_0$ be $\F_0$-measurable such that
\beq\label{KO1} \L_{X_0}=\mu, \ \ \L_{Y_0}=\bar{\mu},\ \ \W_2(\mu,\bar{\mu})^2=\E|X_0-Y_0|^2,\end{equation}
and let $X_t,Y_t$ solve the SDEs with initial values $X_0$ and $Y_0$ respectively:
\beq\label{CPL} \begin{split}
&\d X_t=\big\{b^{(1)}(X_t)+b^{(0)}(X_t,P_t^\ast\mu)\big\}\d t+\sigma(X_t)\d W_t,\\
&\d Y_t  = \big\{b^{(1)}(Y_t)+b^{(0)}(Y_t,\bar\mu)\big\}\d t+\sigma(Y_t) \d W_t.\end{split}\end{equation}
Since $\L_{X_0}=\mu$, and $\L_{Y_0}=\bar\mu$ is $P_t^*$-invariant, we have
\beq\label{KO2} P_t^*\mu=\L_{X_t},\ \ \ \bar\mu= \L_{Y_t},\ \ \ t\ge 0.\end{equation}
 By \eqref{hob} and {\bf(C)} with $R=0$, we obtain
\begin{align*}&\|\si(x)-\si(y)\|_{HS}^2 + 2 \big\< b^{(1)}(x)+ b^{(0)}(x,P_t^*\mu)- b^{(1)}(y)- b^{(0)}(y,\bar\mu),\ x-y\big\>\\
&\le -2 K |x-y|^2 + 2 |b^{(0)}(x,P_t^\ast\mu)-b^{(0)}(y,\bar{\mu})|\cdot |x-y|\\
&\le  (\eta -2 K) |x-y|^2  + 2\eta\|P_t^\ast\mu-\bar{\mu}\|_{k*}|x-y|,   \ \ \ x,y\in\R^d.
\end{align*}
So, by \eqref{CPL} and  It\^{o}'s formula, we derive
$$
 \d |X_t-Y_t|^2\leq (\eta-2K) |X_t -Y_t|^2\d t  + 2\eta\|P_t^\ast\mu-\bar{\mu}\|_{k*}|X_t-Y_t| \d t+\d M_t
$$
for some local martingale $M_t$. Since $\L_{Y_t}=\bar\mu\in \scr P_2$ and $\L_{X_0}=\mu\in \scr P_2,$ this implies
$$ \sup_{s\in [0,t]} \E[|X_s-Y_s|^2]<\infty,\ \ \ t\in (0,\infty),$$
and by combining   with \eqref{KO1} we obtain
  \beg{align*}& \sup_{s\in [0,t]} \e^{(2K-\eta)s} \E[|X_s-Y_s|^2]\le \W_2(\mu,\bar\mu)^2 +2\eta \int_0^t \e^{(2K-\eta) s}\|P_s^*\mu-\bar\mu\|_{k*} \E[|X_s-Y_s|]\d s\\
&\le \W_2(\mu,\bar\mu)^2 +  \eta   \sup_{s\in [0,t]} \e^{(2K-\eta) s} \E[|X_s-Y_s|^2]+  \eta  \bigg(\int_0^t \e^{(K-\eta/2) s} \|P_s^*\mu-\bar\mu\|_{k*} \d s\bigg)^{ 2}.\end{align*}
So,  when  $\eta\in [0, 1\wedge(2K))$, we have
\beq\label{KO3}  { \E[|X_t-Y_t|^2]\le \ff {\e^{-(2K-\eta) t}} {1-\eta} \W_2(\mu,\bar\mu)^2+ \ff{\eta}{1-\eta}   \bigg(\int_0^t  \|P_s^*\mu-\bar\mu\|_{k*} \d s\bigg)^{ 2}.}\end{equation}
Noting that {\bf (C)} implies the assumption \cite[{\bf (A)}]{HRW25} with constant $K$ uniformly in $\|h\|_{\tt L^k}=\eta\le \eta_0$,
so that  \cite[(2.16) and (2.18)]{HRW25} hold  for $p=\infty,q=1$ and $\tau(\gg)=\infty$ for any $\gg\in \scr P$. Consequently, there exist  a constant $c_1\in (0,\infty)$
such that when  $\eta\le \eta_0,$
\beq\label{KO4} \begin{split}
&\|P_t^*\gg-\bar{\mu}\|_{k*}\le c_1  t^{-\ff 1 2-\ff d{2k}}\W_2(\mu,\bar\mu),\\
&\Ent(P_t^*\mu|\bar\mu)\le c_1 t^{-1} \W_2(\mu,\bar\mu)^2, \ \ \mu \in \scr P_2,\ \ t\in(0,1].
\end{split} \end{equation}
Combining the first inequality with  \eqref{KO2} and \eqref{KO3}, we find a constant $c_2\in (0,\infty)$ such that
\beg{align*}  \W_2(P_t^*\mu,\bar\mu)^2&\le \E[|X_t-Y_t|^2] \le   \W_2(\mu,\bar\mu)^2\left[\ff {\e^{-(2K-\eta)t} }{1-\eta}+ \ff{\eta }{1-\eta}
\bigg(\int_0^t  c_1 s^{-\ff 1 2-\ff d{2k}}  \d s\bigg)^{ 2}\right]\\
&\le \W_2(\mu,\bar\mu)^2\bigg(\ff {\e^{-(2K-\eta)t} }{1-\eta}+ \ff{c_2t^{\ff{k-d}k}\eta }{1-\eta}\bigg),\ \ t\ge 0,\  \eta\in [0,\eta_0\land  (2K)\land 1].\end{align*}
Taking $\bar\eta_0\in (0, \eta_0\land K\land 1)$ such that
$$\ff{\e^{-K}}{1-\bar \eta_0}+ \ff{c_2\bar\eta_0}{1-\bar\eta_0}<1,$$
and applying the semigroup property of $P_t^*$,
we  find   constants $c,\ll>0$ such that
$$\W_2(P_t^*\mu,\bar\mu)\le c \e^{-\ll t} \W_2(\mu,\bar\mu),\ \ \ t\ge 0.$$
 Combining this with the second inequality in \eqref{KO4}, we finish the proof.
 \end{proof}

\begin{proof} [Proof of Theorem $\ref{T0z}(2)$]  Let {\bf (C)} hold with constant $\si$.  We emphasize   that   all constants $(c_i)_{i\ge 1}$ and $t_0$ below are uniformly in $\eta\in [0, \eta_0]$, although
 the invariant probability measure  $\bar\mu$  depends on $\eta=\|h\|_{\tt L^k}$.

(a)   By   {\bf (C)}  and \eqref{bob},  we find a constant $c_1 \in (0,\infty)$  such that
 \begin{align*}&\<(b^{(1)}+b^{(0)}(\cdot,\bar{\mu}))(x)-(b^{(1)}+b^{(0)}(\cdot,\bar{\mu}))(y),\ x-y\> \\
 &\qquad \qquad\qquad \leq c_1 |x-y| -K|x-y|^2,\ \ x,y\in \R^d,\\
&  2\<b^{(1)}(x)+b^{(0)}(x,\bar{\mu}),x\>+\|\sigma\|_{HS}^2\leq c_1  -K|x|^2,\ \ \ x\in\R^d.\end{align*}
So, by the proof of  \cite[Theorem 2.6(1)]{HKR25}, we find   constants  $t_0,c_2\in (0,\infty)$   such that
\begin{align}\label{hypbo}
  \|P_{t_0}^{\bar{\mu}}\|_{L^2(\bar{\mu})\to L^{4}(\bar{\mu})} \le c_2.
\end{align}   Moreover, by {\bf (C)} and \eqref{hob}, we find a constant $c_3\in (0,\infty)$ such that
\begin{align*}|(b^{(0)}(\cdot,\bar{\mu})+b^{(1)})(x)-(b^{(0)}(\cdot,\bar{\mu})+b^{(1)})(y)|\leq c_3 |x-y|,\ \ x,y\in \R^d.
\end{align*}
This together with $\nn\si=0$  implies
 $$|\nabla P_t^{\bar{\mu}}f|\leq \e^{c_3 t}P_t^{\bar{\mu}}|\nabla f|,\ \ f\in C_b^1(\R^d).$$
Combining this with $\|a\|_\infty+\|a^{-1}\|_\infty<\infty$, by the standard Bakry-\'{E}mery argument \cite{BE}, see for instance the proof of \cite[(2.4)]{HKR25}, we  find a constant $c_4\in (0,\infty)$
such that the following semigroup log-Sobolev inequality
holds: $$  P_{t}^{\bar{\mu}}(f^2\log f^2)-(P_{t}^{\bar{\mu}}f^2)\log P_{t}^{\bar{\mu}}f^2  \leq  c_4(\e^{c_4t}-1)  P_{t}^{\bar{\mu}}  |\nabla   f |^2,
 \ \ t\geq 0,   f\in C_b^1(\R^d).
$$
According to \cite[Lemma 2.5(1)]{HKR25}, this together with \eqref{hypbo} implies   the defective log-Sobolev inequality  for some constant $c_5\in (0,\infty)$:
 \beq \label{delos}\bar{\mu}(f^2\log f^2)\leq c_5\bar{\mu}(|\nabla f|^2)+c_5,\ \ \ f\in C_b^\infty(\R^d), \bar{\mu}(f^2)=1,
\end{equation} which further implies the defective Poincar\'e inequality for possibly different constant $c_5\in (0,\infty)$:
\beq\label{PC} \bar\mu(f^2)\le c_5\bar\mu(|\nn f|^2) + c_5\mu(|f|)^2,\ \ \ f\in C_b^1(\R^d).\end{equation}
 Moreover, according to \cite[Corollary 1.2.11]{BKR}, under {\bf (C)} we have $\bar\rr=\e^{V}$ for some locally bounded function $V$ on $\R^d$ uniformly in $\eta\in [0,\eta_0]$.
 Combining this with the classical Poincar\'e inequality on balls,  we may apply  \cite[Theorem 3.1]{RW01} to derive the weak Poincar\'e inequality
 $$\bar\mu(f^2)\le \aa(r) \bar\mu(|\nn f|^2)+ r\|f\|_\infty^2,\ \ \ r>0,\ f\in C_b^1(\R^d), \bar\mu(f)=0,$$
 where $\aa: (0,\infty)\to (0,\infty)$ is uniformly in $\eta\in [0,\eta_0]$. According to \cite[Proposition 4.1.2]{Wbook}, this together with \eqref{PC} implies the Poincar\'e inequality, so that
 \eqref{delos} implies the desired log-Sobolev inequality \eqref{LS} for some constant $C\in (0,\infty)$ uniformly in $\eta\in [0,\eta_0].$

(b) According to \cite{BGL},  \eqref{LS} implies the Talagrand inequality
\begin{align}\label{talgr}\W_2(\mu,\bar{\mu})^2\leq  {C}\,\mathrm{Ent}(\mu|\bar{\mu}),\ \ \mu\in\scr P_2.
\end{align}
Moreover, since $\|\si^{-1}\|_{\infty}<\infty$,  \eqref{LS} implies the log-Sobolev inequality for the diffusion process with semigroup $P_t^{\bar \mu}:$
$$\bar\mu(f^2\log f^2)\le C \|\si^{-1}\|_\infty^2 \bar\mu(\si^*\nn f|^2),\ \ \ f\in C_b^1(\R^d),\ \bar\mu(f^2)=1.$$
It is well-known that this implies the exponential ergodicity of $P_t^{\bar\mu*}$ in entropy:
\beq\label{ENT} \Ent(P_t^{\bar\mu*}\mu|\bar\mu)\le \e^{-2\ll_1 t} \Ent(\mu|\bar\mu),\ \ \mu\in \scr P,\ t\ge 0,\end{equation} where
$\ll_1:= \ff{2}{C\|\si^{-1}\|_\infty^2}\in (0,\infty),$ see for instance \cite[Theorem 5.2.1]{BL}.
By {\bf (C)}, since $\si\si^*$ is constant and invertible,  $b^{(1)}$ is Lipschitz continuous  and $b^{(0)}(\cdot,\bar\mu)$ is bounded,
by \cite[Lemma 4.2]{YZ} we find a constant $c_6\in (0,\infty)$ such that
$$\Ent(P_t^{\bar\mu*}\mu|\bar\mu) = \Ent(P_t^{\bar\mu*}\mu|P_t^*\bar\mu)\le \ff {c_6}{1\land t } \W_2(\mu,\bar\mu)^2,\ \ t>0,\ \mu\in \scr P_2.$$
Combining this with \eqref{talgr} and \eqref{ENT} we find a constant $c_7\in (0,\infty)$ such that
  \beg{align*} &\W_2(P_t^{\bar\mu*}\mu,\bar{\mu})^2 \le C \Ent(P_t^{\bar\mu*}\mu| \bar\mu)\\
&\le C\e^{-2\ll_1 (t-1)} \Ent(P_1^{\bar\mu*}\mu| \bar\mu)\le c_7^2 \e^{-2\ll_1 t} \W_2(\mu,\bar\mu)^2,\ \ t\ge 1.\end{align*}
This together with  $P_t^*\mu= \bar{P}_t^{\mu*}\mu$ due to \eqref{LX} and the triangle inequality yields
\beq\label{TRA} \beg{split}&\W_2(P_t^*\mu, \bar\mu)\le \W_2(P_t^{\bar \mu*}\mu, \bar\mu)+ \W_2(P_t^{\bar \mu*}\mu, \bar{P}_t^{\mu*}\mu)\\
&\le c_7 \e^{-\ll_1 t} \W_2(\mu,\bar\mu)+\W_2(P_t^{\bar \mu*}\mu, P_t^{\mu*}\mu),\ \ t\ge 1.\end{split}\end{equation}

To estimate $\W_2(P_t^{\bar \mu*}\mu, \bar{P}_t^{\mu*}\mu)$, let $X_0=\bar X_0$ be $\F_0$-measurable with $\L_{X_0}=\mu$, and let $(X_t,\bar X_t)$ solve
 \begin{align*}
&\d X_t=\big\{b^{(1)}(X_t)+b^{(0)}(X_t,P_t^*\mu)\big\}\d t+\sigma(X_t)\d W_t,\\
&\d \bar X_t  = \big\{b^{(1)}(\bar X_t)+b^{(0)}(\bar X_t,\bar\mu)\big\}\d t+\sigma(\bar X_t) \d W_t.\end{align*}
Then $\L_{X_t}= P_t^*\mu, \L_{\bar X_t}=P_t^{\bar\mu*}\mu$, so that
\beq\label{ER} \W_2(P_t^*\mu,P_t^{\bar\mu*} \mu)^2\le \E[|X_t-\bar X_t|^2],\ \ \ t\ge 0.\end{equation}
 {By    {\bf (C)} and \eqref{hob}, we find a constant $c_6\in (0,\infty)$ such that
$$2\<b(x,\mu)-b(y,\bar{\mu}), x-y\>\le c_6 |x-y|^2+ 2\eta  |x-y|\|\mu-\bar{\mu}\|_{k*},$$}
so that by It\^o's formula,
 \begin{align*}
\d |X_t-\bar{X}_t|^2\leq c_6 |X_t-\bar{X}_t|^2\d t+2\eta\|P_t^\ast\mu-\bar{\mu}\|_{k*}|X_t-\bar{X}_t|\d t.
\end{align*}
This and $X_0=\bar X_0$ imply
\begin{align*}
&\sup_{s\in[0,t]}|X_s-\bar{X}_s|^2\leq 2 \eta \e^{c_6t}\int_0^t  \|P_s^\ast\mu-\bar{\mu}\|_{k*}|X_s-\bar{X}_s|\d s\\
&\leq \frac{1}{2} \sup_{s\in[0,t]}|X_s-\bar{X}_r|^2+2\eta^2 \left(\e^{c_6t} \int_0^t  \|P_s^\ast\mu-\bar{\mu}\|_{k*}\d s\right)^2.
\end{align*}
This implies
$$\sup_{s\in[0,t]}|X_s-\bar{X}_s|^2\le 4 \eta^2 \left(\e^{c_6t} \int_0^t  \|P_s^\ast\mu-\bar{\mu}\|_{k*}\d s\right)^2,$$
so that \eqref{ER} and the first inequality in \eqref{KO4}  yield
\begin{align*}
\W_2(P_t^\ast\mu,P_t^{\bar{\mu}\ast}\mu)^2&\leq \E|X_t^\mu-\bar{X}_t|^2\leq \left(2\eta \e^{c_6t} \int_0^t \|P_s^\ast\mu-\bar{\mu}\|_{k*}\d s\right)^2\\
&\le \big(c_8 \eta \e^{c_8t} \W_2(\mu,\bar\mu)\big)^2,\ \ t\ge 0,\ \mu\in \scr P_2.
\end{align*}
Combining with   \eqref{TRA} gives
$$ \W_2(P_t^*\mu,\bar\mu)    \le  \W_2(\mu,\bar\mu)\Big( {c_7 \e^{-\ll_1 t}}+ c_8 \eta \e^{c_8t} \Big) ,\ \ t\ge 1,\ \mu\in\scr P_2.$$
Taking $t_0\in [1,\infty)$ and $\bar\eta_0\in (0,\eta_0]$ such that  when $\eta\le \bar\eta_0$,
$$ {c_7 \e^{-\ll_1 t_0}}+ c_8\bar\eta_0 \e^{c_8t_0} <1,$$
as in step (a), we find  constants $c',\ll'\in (0,\infty)$ such that
$$\W_2(P_t^*\mu,\bar\mu) \le c'\e^{-\ll' t} \W_2(\mu,\bar\mu),\ \ \ t\ge 0,\ \mu\in \scr P_2.$$
According to  \cite[Theorem 2.1]{RW21},  the proof is finished by combining this   with \eqref{talgr} and  {the second inequality in \eqref{KO4}}.  \end{proof}

 {\bf Acknowledgement.} The   authors would like to thank the referee for helpful comments.

\end{document}

\section{The strongly singular case: $k\in (1,d]$ for $d\ge 2$}
Finally, for the case $k\in(1,d]$, the existence and uniqueness of the invariant probability measure can be derived in the following theorem. We leave the ergodicity in the future study and it seems much more difficult.
\begin{thm}\label{T0k} Assume {\bf (A)} and {\bf(B)} for $k\in(1,d]$. Then there exists a constant $\eta_1>0$ such that for any $\eta\in(0,\eta_1)$, \eqref{E0} has a unique invariant probability $\bar{\mu}\in\scr P_{k*}$.
\end{thm}
\begin{proof}
(1) By \cite[Lemma 3.3]{HRW25}, {\bf (A)} and {\bf (B)} imply that for any $\mu\in\scr P_{k*}$, any $p>1$, there exists a constant $c(p)\in (0,\infty)$ such that
 $$\|P_t^{\mu}\|_{\tt L^p\to\tt L^\infty}\le c(p) t^{-\ff{d}{2p}}, \  t\in (0,1].$$
Noting that $\Phi(\mu)(f)=\Phi(\mu)(P_1^{\mu} f)$,
we have
$$\|\Phi(\mu)\|_{k*}\leq \|P_1^{\mu}\|_{\tt L^k\to\tt L^\infty}\leq c(k)<\infty.$$

(2) By \eqref{YS} for $p_1=p_2=k$, {\bf(A)}, \eqref{DH}, \eqref{YS1} for $p_1=k,p_2=\infty$ and taking $\mu=\Phi(\mu)$, we find  a constant  $c_1>0$ such that
  \beg{align*} &\|\Phi(\mu)\|_{k*}= \sup_{\|f\|_{\tt L^k}\le 1} |(P_t^{\mu*}\Phi(\mu))(f)|\\
 &\le \| \hat{P}_t\|_{\tt L^k\to \tt L^\infty}+ \eta\|\mu\|_{k*}\int_0^t \|\Phi(\mu)\|_{k*}\|\nn  \hat{P}_{t-s}\|_{\tt L^k\to\tt L^k}\d s\\
 &\le c_1 t^{-\ff {d}{2k}}  + c_1\eta\|\mu\|_{k*}\|\Phi(\mu)\|_{k*}\int_0^t  (t-s)^{-\ff 1 2}\d s,\ \ t\in(0,1].\end{align*}
 So, taking $t=1$, there exists a constant $c_2>0$ such that
 \beq\label{U1}\beg{split} &\|\Phi(\mu)\|_{k*}\leq c_2+c_2\eta\|\mu\|_{k*}\|\Phi(\mu)\|_{k*}.\end{split}\end{equation}
Let $\eta_1=\frac{1}{4c_2^2}$. When $\eta<\eta_1$, for any fixed point $\hat{\mu}$ of $\Phi$ in $\scr P_{k*}$, it holds
$$ \|\mu\|_{k*}\leq c_2+c_2\eta\|\mu\|_{k*}^2.$$
This implies
$$\|\hat{\mu}\|_{k*}\leq \frac{1-\sqrt{1-4c_2^2\eta}}{2c_2\eta}=:M(\eta).$$
Moreover, for any $\mu\in \scr P_{k*}^{M(\eta)}$, it holds
$$\|\Phi(\mu)\|_{k*}\leq M(\eta).$$

This together with Lemma \ref{L1}(2) implies
$\Phi:\ \scr P_{k*}^{M(\eta)}\cap \scr P_1\to \scr P_{k*}^{M(\eta)}\cap \scr P_1.$
It remains to show that $\Phi$ is contractive on $\scr P_{k*}^{M(\eta)}\cap \scr P_1$ under the complete metric
$$\W(\mu,\nu):= \|\mu-\nu\|_{k*}+\W_1(\mu,\nu).$$
The proof is more or less the same as that of part (b) in the Proof of Theorem \ref{T0} except the estimate for
$\|P_{T_\eta}^{\mu*} \Phi(\nu)- P_{T_\eta}^{\nu*} \Phi(\nu)\|_{k*}$.
In fact, by {\bf (B)}, \eqref{ES3} and the Duhamel formula, see for instance \cite[(5.33)]{HRW25}, we obtain
 \beq\label{myt} \beg{split}  & \|P_{T_\eta}^{\mu*} \Phi(\nu)- P_{T_\eta}^{\nu*} \Phi(\nu)\|_{k*}=\sup_{\|f\|_{\tt L^k}\le 1} \big|\Phi(\nu)(P_{T_\eta}^\mu f- P_{T_\eta}^\nu f)\big| \\
 &\le\sup_{\|f\|_{\tt L^k}\le 1}\|\Phi(\nu)\|_{k*}\int_0^{T_\eta} \big\|P_s^\mu\big\|_{\tilde{L}^k\to \tilde{L}^k}\|b^{(0)}(\cdot,\mu)- b^{(0)}(\cdot,\nu)\|_\infty\big\|\nn P_{t-s}^\nu \big\|_{\tilde{L}^k\to \tilde{L}^k}\d s\\
 &\le M(\eta)\eta \|\mu-\nu\|_{k*}c_1(\eta)^2 \int_0^{T_\eta}  t^{-\ff 1 2}\d t= 2T_\eta^{\ff{1}{2}}M(\eta)\eta c_1(\eta)^2 \|\mu-\nu\|_{k*}.\end{split}\end{equation}
By repeating the part (b) in the Proof of Theorem \ref{T0} with \eqref{myt} replacing \eqref{14}, we complete the proof.
\end{proof}
\begin{exa} Consider
\begin{align}\label{fsd}\d X_t=b^{(1)}(X_t)\d t+\int_{\R^d}h(X_t-y)\L_{X_t}(\d y)\d t+\d W_t.
\end{align}
$b^{(1)}$ satisfies {\bf(A)},
$$h(x)=\sum_{i=1}^l \ff {\eta}{1\land |x-z_i|^\bb}$$
for some constants $l\geq 1,\eta>0,$ $\bb\in (0,d)$ and finite many points $\{z_i\}_{1\le i\le l}\subset \R^d.$

We list some classical kernel which are covered.
\beg{enumerate}
\item[(1)] {\bf Coulomb/Newton kernels.} Let $\oo_d$ be the volume of the unit ball in $\R^d$. The $d$-dimensional  Coulomb kernel
$${\bf K}_{C}(x,y):= \ff{x-y}{d\oo_d|x-y|^d},\ \ x\ne y,$$
and the  Newton kernel ${\bf K}_{N}:=-{\bf K}_{C}$.
\item[(2)] {\bf Biot-Savart kernel.} Let $s_{d-1}$ be the area of $(d-1)$-dimensional unit sphere for $d\ge 2$, and let $z^\perp:=(-z_2,z_1)$ for $z=(z_1,z_2)\in\R^2$.
$${\bf K}_{BS}(x,y):=\beg{cases} \ff{(x-y)^{\perp}}{2\pi|x-y|^2},\ &\text{if}\ d=2, \ x\ne y,\\
\ff{x-y}{s_{d-1}|x-y|^{d}},\ &\text{if}\ d\ge 3,\ x\ne y \end{cases}.$$
\item[(3)] {\bf Riesz  kernel.} For $0\ne \kk \in \R$ and $\bb\in (0, d)$, the Riesz  kernel
$${\bf K}_R(x,y):=\ff{\kk(x-y)}{|x-y|^{\bb+1}},\ \ x\ne y.$$
 \end{enumerate}

By Theorem \ref{T0k}, when $\eta$ is small enough, \eqref{fsd} has a unique invariant probability measure.
\end{exa}